\theoremstyle{plain}
\newtheorem{thm}[equation]{Theorem}
\newtheorem*{thm*}{Theorem}
\newtheorem{prop}[equation]{Proposition} 
\newtheorem{lem}[equation]{Lemma}
\theoremstyle{definition}
\newtheorem{defn}[equation]{Definition}
\newtheorem*{defn*}{Definition}
\newtheorem*{ex*}{Example}
\newcommand{\CC}{{\mathbb C}}
\newcommand{\RR}{{\mathbb R}}
\newcommand{\ZZ}{{\mathbb Z}}
\newcommand{\mcD}{\mathcal{D}}
\newcommand{\mcX}{\mathcal{X}}
\newcommand{\frakg}{\mathfrak{g}}
\newcommand{\frakgl}{\mathfrak{gl}}
\newcommand{\frakh}{\mathfrak{h}}
\newcommand{\fraks}{\mathfrak{s}}
\newcommand{\frakt}{\mathfrak{t}}
\newcommand{\fraku}{\mathfrak{u}}
\newcommand{\GL}{GL}
\newcommand{\SL}{SL}
\renewcommand{\to}{\longrightarrow}
\providecommand{\Ad}{\mathop{\rm Ad}\nolimits}
\providecommand{\ad}{\mathop{\rm ad}\nolimits}
\providecommand{\Id}{\mathop{\rm Id}\nolimits}
\providecommand{\Aut}{\mathop{\rm Aut}\nolimits}
\providecommand{\lspan}{\mathop{\rm span}\nolimits}
\providecommand{\pr}{\mathop{\rm pr}\nolimits}
\providecommand{\tr}{\mathop{\rm tr}\nolimits}
\begin{document}

\centerline{\LARGE\bfseries{Rigidity of lattices and syndetic hulls}}
\vspace{0.25cm}
\centerline{\LARGE\bfseries{in solvable Lie groups}}
\vspace{1cm}
\centerline{\large Oliver Ungermann}
\vspace{0.75cm}
\centerline{Institut f\"ur Mathematik und Informatik}
\centerline{Universit\"at Greifswald}
\vspace{0.75cm}
\centerline{\large November 2013}
\vspace{1cm}

\begin{abstract}
First, let $G$ be a completely solvable Lie group. We recall the proof of the
fact that any closed subgroup of $G$ possesses a unique syndetic hull in $G$.
As a consequence we obtain that strong rigidity holds for the class of completely
solvable Lie groups in the sense of G.\ D.\ Mostow: Let $G_1$, $G_2$ be completely
solvable, $\Gamma$ a uniform subgroup of $G_1$ and $\alpha:\Gamma\to G_2$ a
homomorphism of Lie groups such that $\alpha(\Gamma)$ is uniform in $G_2$. Then
there is an isomorphism $\varphi:G_1\to G_2$ such that $\varphi\,|\,\Gamma=\alpha$.
Now let $G$ be an arbitrary (exponential) solvable Lie group. We will discuss certain
conditions on closed subgroups of~$G$ which are sufficient for the existence of
a syndetic hull.
\end{abstract}

\vspace{1cm}

\subsection*{\large Introduction}
Let $G$ be an exponential solvable Lie group with Lie algebra~$\frakg$. This means
that the exponential mapping gives a diffeomorphism of~$\frakg$ onto~$G$. In particular
$G$ is simply connected.  We say that a solvable Lie group is completely solvable if
it is simply connected and all its roots are real. Every completely solvable Lie group
is exponential. In Section~1 we will recall the proof that normalizers of connected Lie
subgroups of completely solvable Lie groups are connected.\\

\begin{defn*}
Let $\Gamma$ be a closed subgroup of a Lie group $G$. A connected Lie subgroup $S$ of $G$ with
$\Gamma\subset S$ and $\Gamma\setminus S$ compact is called a syndetic hull of $\Gamma$ in $G$.
\end{defn*}

This definition implies that $S$  is a closed subgroup of $G$ as the preimage of the compact
set $\Gamma\setminus S$ under the projection $G\to\Gamma\setminus G$.\\

At the end of Section~2 we will deduce the proof of the following result.

\begin{thm*}
Every closed subgroup of a completely solvable Lie group possesses a unique syndetic hull.
\end{thm*}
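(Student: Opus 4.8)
The plan is to exploit that $\exp\colon\frakg\to G$ is a diffeomorphism, so that the connected Lie subgroups of $G$ are exactly the sets $\exp(\fraks)$ for Lie subalgebras $\fraks\subseteq\frakg$, each of them closed and itself completely solvable. Injectivity of $\exp$ gives $\exp(\fraks)\cap\exp(\frakt)=\exp(\fraks\cap\frakt)$ for subalgebras $\fraks,\frakt$, so that intersections of connected subgroups are again connected. Consequently, for a closed subgroup $\Gamma$ there is a smallest connected subgroup containing it, namely $S=\exp(\frakh)$, where $\frakh$ is the Lie subalgebra generated by $\log\Gamma$. I would prove the theorem by showing that this $S$ is a syndetic hull and that it is the only one.

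Uniqueness I would settle first, since it is formal. Let $S$ be any syndetic hull of $\Gamma$ and let $T$ be any connected subgroup with $\Gamma\subseteq T$. Then $S\cap T$ is connected and contains $\Gamma$, and $\Gamma\setminus(S\cap T)$ is closed in the compact space $\Gamma\setminus S$, hence compact. As $S$ is simply connected solvable and $S\cap T$ is a closed connected subgroup, the homogeneous space $(S\cap T)\setminus S$ is diffeomorphic to some $\RR^{k}$; but it is a continuous image of the compact space $\Gamma\setminus S$, so $k=0$ and $S\subseteq T$. Thus every syndetic hull is contained in every connected subgroup containing $\Gamma$; in particular any two syndetic hulls coincide, and each equals the minimal subgroup $S=\exp(\frakh)$ above.

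It remains to prove existence, i.e.\ that $\Gamma$ is cocompact in $S=\exp(\frakh)$. Replacing $G$ by $S$, I may assume that $\log\Gamma$ generates $\frakg$ as a Lie algebra and must show that $\Gamma$ is uniform in $G$. I would argue by induction on $\dim G$, using a codimension-one ideal $\frakn\supseteq[\frakg,\frakg]$ and the resulting projection $p\colon G\to G/\exp(\frakn)\cong\RR$. Since $\log\Gamma$ generates $\frakg$ and $[\frakg,\frakg]\subseteq\frakn$, the image $p(\Gamma)$ is nontrivial. If $p(\Gamma)$ is infinite cyclic, write $\Gamma_{0}=\Gamma\cap\exp(\frakn)$; by induction $\Gamma_{0}$ has a syndetic hull $S_{0}\subseteq\exp(\frakn)$. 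Any $\gamma\in\Gamma$ conjugates $\Gamma_{0}$ into itself, hence by the uniqueness just proved normalizes $S_{0}$; and here the completely solvable hypothesis enters decisively, for $\ad(\log\gamma)$ has real eigenvalues, so the invariant subspaces of $\Ad(\gamma)=\exp(\ad(\log\gamma))$ and of $\ad(\log\gamma)$ coincide, and the whole one-parameter group $\exp(\RR\log\gamma)$ normalizes $S_{0}$. Then $S_{0}\exp(\RR\log\gamma)$ is a connected subgroup containing $\Gamma$ in which $\Gamma$ is cocompact, and by minimality it is all of $G$.

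The main obstacle is the remaining case, in which $p(\Gamma)$ is dense in $\RR$ while $\Gamma\cap\exp(\frakn)$ is trivial, so that $p$ embeds $\Gamma$ as a dense subgroup of $\RR$ and the naive reduction does not lower the dimension; the model is $\ZZ^{2}\hookrightarrow\RR^{2}$ with irrational slope, whose hull is all of $\RR^{2}$. Here one cannot peel off a factor, and one must instead establish cocompactness directly. My intended route is again through Section~1: $\Gamma$ normalizes the hull $S_{0}$ of $\Gamma\cap\exp(\frakn)$, so $\Gamma\subseteq N_{G}(S_{0})$, which is connected; when $S_{0}\neq\{e\}$ one quotients by the now-normal $S_{0}$ to drop the dimension, and the degenerate subcase $S_{0}=\{e\}$ must be handled by a closure argument showing that the real-eigenvalue condition forbids the irrational winding that would keep $\Gamma$ from being uniform. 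This compactness estimate, rather than any of the algebra above, is where I expect the real work of the proof to lie.
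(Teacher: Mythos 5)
Your overall architecture is workable, and your uniqueness argument is correct; it rests on the same facts the paper uses (in a completely solvable group every connected Lie subgroup is $\exp$ of a subalgebra and is closed, intersections of connected subgroups are connected, and a Euclidean homogeneous space that is compact is a point), and it even yields slightly more than the paper states, namely that the hull must be the \emph{minimal} connected subgroup containing $\Gamma$. Your Case A (image of $\Gamma$ in $G/\exp(\frakn)\cong\RR$ infinite cyclic) is also essentially complete: the passage from $\Ad(\gamma)$-invariance of $\fraks_0$ to $\ad(\log\gamma)$-invariance via real eigenvalues is exactly the mechanism the paper isolates in Lemmas \ref{L1_3} and \ref{L2_1}, and your induction-on-dimension scheme via a codimension-one ideal $\frakn\supseteq[\frakg,\frakg]$ is a genuinely different (and arguably more elementary) decomposition than the paper's, which instead takes the hull of $\Gamma\cap[G,G]$ inside the nilpotent group $[G,G]$ using Zariski closures of unipotent groups (Proposition \ref{P2_1}) and then quotients.

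However, there is a genuine gap, and you have flagged it yourself: the terminal case where $p(\Gamma)$ is dense in $\RR$ and $\Gamma\cap\exp(\frakn)$ is trivial is never proved; you only gesture at ``a closure argument'' and a ``compactness estimate.'' This case is the entire analytic content of the theorem (the model $\ZZ^2\subset\RR^2$ with irrational slope \emph{is} this case), so without it the induction does not close and the proof is incomplete at its central point. The fix is short and purely algebraic, and it is precisely the paper's Lemma \ref{L2_2}; you already possess every tool it needs. In that case $\Gamma\cap[G,G]\subseteq\Gamma\cap\exp(\frakn)=\{e\}$, so $\Gamma$ is abelian; moreover $\Gamma$ is discrete (its identity component meets $\exp(\frakn)$ trivially, hence has dimension at most one, and in the one-dimensional case $\Gamma$ is connected and the statement is trivial by your minimality). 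Now for $\gamma,\delta\in\Gamma$ one has $\exp(\Ad(\gamma)\cdot\log\delta)=\gamma\delta\gamma^{-1}=\delta$, so injectivity of $\exp$ gives $\Ad(\gamma)\cdot\log\delta=\log\delta$, and the real-eigenvalue argument you used in Case A gives $[\log\gamma,\log\delta]=0$. Hence $\fraks:=\lspan\log\Gamma$ is an abelian subalgebra, $\exp:\fraks\to S:=\exp(\fraks)$ is an isomorphism from the vector group $(\fraks,+)$ onto $S$, and under it $\Gamma$ corresponds to a discrete subgroup of $\fraks$ that spans $\fraks$; by the classical structure theorem for discrete subgroups of $\RR^n$ such a subgroup is a lattice, so $\Gamma\setminus S$ is compact, and $S=G$ by minimality. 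No closure argument or estimate is needed: the ``irrational winding'' is controlled not by topology but by the fact that $\log\Gamma$ sits as a spanning discrete subgroup inside a genuine vector group.
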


In Section~3 we introduce the new notions of algebraically dense and algebraically connected
subgroups of arbitary (exponential) solvable Lie groups which turn out to be sufficient conditions
for the existence of syndetic hulls.

\begin{defn*}
A subgroup $\Gamma$ of a Lie group $G$ is called uniform if $\Gamma\setminus G$ is a compact
Hausdorff space.
\end{defn*}

Uniform subgroups are closed.\\

Recall that a subgroup $\Gamma$ of $G$ is called analytically dense if the only connected Lie subgroup
of $G$ containing $\Gamma$ is $G$ itself. This property was studied by R.\ D.\ Mosak and M.\ Moskowitz
in~\cite{MM1}. It is easy to see that uniform subgroups of simply connected solvable Lie groups are
analytically dense, compare also theorem~8 of~\cite{MM1}.\\

Following the argument of D.\ Witte Morris in Section~3 of~\cite{W2} and using the preceding
theorem, we obtain that strong rigidity in the sense of Mostow's definition in~\cite{M}
holds for the class of completely solvable Lie groups.

\begin{thm*}
Let $G_1$, $G_2$ be completely solvable Lie groups of the same dimension and $\Gamma$ a uniform
subgroup of~$G_1$. Let $\alpha:\Gamma\to G_2$ be a homomorphism of Lie groups such that $\alpha(\Gamma)$
is uniform in $G_2$. Then there exists a unique isomorphism $\varphi:G_1\to G_2$ such that
$\varphi\,|\,\Gamma=\alpha$.
\end{thm*}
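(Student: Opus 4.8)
The plan is to reduce the statement to the syndetic hull theorem via the graph trick of Witte Morris. Since $G_1$ and $G_2$ are completely solvable, so is the product $G:=G_1\times G_2$ (it is simply connected, and the roots of $\frakg_1\oplus\frakg_2$ are those of the two factors, hence real). Inside $G$ I would consider the graph
\[
\Gamma_\alpha:=\{(\gamma,\alpha(\gamma)):\gamma\in\Gamma\}.
\]
Because $\alpha$ is continuous and $G_2$ is Hausdorff, $\Gamma_\alpha$ is closed in $\Gamma\times G_2$, and as $\Gamma$ is closed in $G_1$ it is closed in $G$. Hence by the preceding theorem $\Gamma_\alpha$ has a unique syndetic hull $S$: a connected Lie subgroup of $G$ with $\Gamma_\alpha\subset S$ and $\Gamma_\alpha\setminus S$ compact. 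Writing $p_1\colon G\to G_1$ and $p_2\colon G\to G_2$ for the projections, the entire argument comes down to showing that $S$ is the graph of the desired isomorphism, i.e.\ that $p_1|_S$ and $p_2|_S$ are isomorphisms of Lie groups.

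First I would establish surjectivity. The image $p_1(S)$ is a connected Lie subgroup of $G_1$ containing $p_1(\Gamma_\alpha)=\Gamma$. As noted in the introduction, the uniform subgroup $\Gamma$ is analytically dense in the simply connected solvable group $G_1$, so the only connected Lie subgroup containing it is $G_1$ itself; therefore $p_1(S)=G_1$. The same reasoning applied to $p_2$, using that $\alpha(\Gamma)$ is uniform and hence analytically dense in $G_2$, gives $p_2(S)=G_2$.

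Next comes injectivity, which I expect to be the crux. Put $K:=\ker(p_1|_S)=S\cap(\{e\}\times G_2)$, a closed subgroup of $S$ meeting $\Gamma_\alpha$ only in the identity, since $(e,\alpha(\gamma))\in K$ forces $\gamma=e$. The surjection $p_1|_S\colon S\to G_1$ descends to a continuous surjection of the compact space $\Gamma_\alpha\setminus S$ onto $\Gamma\setminus G_1$ whose fibre over the base point is homeomorphic to $K$; as a closed subset of a compact space this fibre is compact, so $K$ is compact. But $K$ is a compact subgroup of the simply connected solvable group $\{e\}\times G_2\cong G_2$, which contains no nontrivial compact subgroup, whence $K=\{e\}$. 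Thus $p_1|_S$ is a continuous bijective homomorphism of Lie groups, hence an isomorphism, and symmetrically so is $p_2|_S$; in particular $\dim G_1=\dim S=\dim G_2$, so the equal--dimension hypothesis is in fact recovered automatically. Setting $\varphi:=(p_2|_S)\circ(p_1|_S)^{-1}$ gives an isomorphism $G_1\to G_2$, and since $(p_1|_S)^{-1}(\gamma)=(\gamma,\alpha(\gamma))$ for $\gamma\in\Gamma$ we obtain $\varphi(\gamma)=\alpha(\gamma)$, i.e.\ $\varphi\,|\,\Gamma=\alpha$.

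For uniqueness I would exploit the uniqueness of the syndetic hull itself. If $\psi\colon G_1\to G_2$ is any isomorphism with $\psi\,|\,\Gamma=\alpha$, then $\{(g,\psi(g)):g\in G_1\}$ is a connected Lie subgroup of $G$ containing $\Gamma_\alpha$; via $p_1$ it is isomorphic to $G_1$ carrying $\Gamma_\alpha$ to $\Gamma$, so its quotient by $\Gamma_\alpha$ is compact and the graph of $\psi$ is a syndetic hull of $\Gamma_\alpha$. By the uniqueness clause it equals $S$, which forces $\psi=(p_2|_S)\circ(p_1|_S)^{-1}=\varphi$. The main obstacle throughout is the injectivity step: everything hinges on converting the compactness of $\Gamma_\alpha\setminus S$ into compactness, and hence triviality, of the projection kernels, which is precisely where the uniformity hypotheses on $\Gamma$ and $\alpha(\Gamma)$ are used. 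The remaining point--set verifications (closedness of the graph, the identification of the fibre with $K$, and the fact that a continuous bijective homomorphism of Lie groups is an isomorphism) are routine.
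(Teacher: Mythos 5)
Your overall scheme (graph subgroup, syndetic hull in $G_1\times G_2$, surjectivity of the projections via analytic density, uniqueness via uniqueness of the hull) is exactly the paper's, but the proof breaks at the single word ``symmetrically''. Your argument for $K=\ker(p_1|_S)$ used that $K\cap\Gamma_\alpha=\{e\}$, which holds because $(\gamma,\alpha(\gamma))\in\{e\}\times G_2$ forces $\gamma=e$. For $K_2:=\ker(p_2|_S)=S\cap(G_1\times\{e\})$ the analogous intersection is $K_2\cap\Gamma_\alpha=\ker\alpha\times\{e\}$, and injectivity of $\alpha$ is not a hypothesis --- it is part of what must be proved. Accordingly, the fibre of $\Gamma_\alpha\setminus S\to\alpha(\Gamma)\setminus G_2$ over the base point is not $K_2$ but the quotient $(\ker\alpha\times\{e\})\setminus K_2$, and its compactness only says that $\ker\alpha$ is cocompact in $K_2$. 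This cannot be patched as stated, because your proof never uses the equal-dimension hypothesis (you even claim it is recovered automatically), and the theorem is false without it: take $G_1=\RR^2$, $\Gamma=\ZZ\times\RR$, $G_2=\RR$, $\alpha(n,t)=t$. All other hypotheses hold, the unique syndetic hull of $\Gamma_\alpha$ is $S=\{(s,t,t):s,t\in\RR\}$, and $p_1|_S$ is an isomorphism; but $K_2=\RR\times\{(0,0)\}$ is noncompact and $\varphi(s,t)=t$ is surjective, not injective. Injectivity must therefore be obtained as in the paper: $\varphi$ is a surjective homomorphism, so $d\varphi:\frakg_1\to\frakg_2$ is surjective, hence bijective since $\dim G_1=\dim G_2$; then $\varphi=\exp_{G_2}\circ\, d\varphi\circ\exp_{G_1}^{-1}$ is bijective (equivalently, $\ker\varphi$ is discrete, so $\varphi$ is a covering map onto the simply connected group $G_2$ and hence injective).

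A secondary gap: even for $K=\ker(p_1|_S)$, ``the fibre is homeomorphic to $K$'' is not a routine point-set verification. What is routine is that the natural map $K\to F$ is a continuous bijection, and a continuous bijection onto a compact space does not force the domain to be compact (consider $[0,2\pi)\to S^1$). To conclude you need either the open-mapping theorem for transitive actions of $\sigma$-compact locally compact groups (a Baire-category argument), or the paper's direct properness argument, which is elementary: choose a compact set $C\subset S$ with $S=\Gamma_\alpha C$; any $(e,z)\in K$ can be written as $(\gamma,\alpha(\gamma))(c_1,c_2)$ with $(c_1,c_2)\in C$, which forces $\gamma=c_1^{-1}$ to lie in the compact set $\Gamma\cap p_1(C)^{-1}$, so $z\in\alpha\bigl(\Gamma\cap p_1(C)^{-1}\bigr)p_2(C)$; thus $K$ is a closed subset of a compact set, hence compact, hence trivial. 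Note that this is precisely where the continuity of $\alpha$ enters in an essential way, beyond the closedness of the graph.
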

\begin{proof}
We consider the closed subgroup $\widehat{\Gamma}:=\{(x,\alpha(x)):x\in\Gamma\}$
of the exponential solvable Lie group ~$\widehat{G}:=G_1\times G_2$. The preceding theorem
implies that there exists a closed connected subgroup $\widehat{S}$ of $\widehat{G}$ such
that $\widehat{\Gamma}\setminus\widehat{G}$ is compact. Choose a compact subset $\widehat{C}$
of $\widehat{S}$ such that $\widehat{S}=\widehat{\Gamma}\widehat{C}$. Let $p:\widehat{G}\to G_1$
denote the projection onto the first factor. Since $p(\widehat{S})$ is a connected Lie subgroup
of $G_1$ containing the uniform subgroup~$\Gamma$, it follows $p(\widehat{S})=G_1$. It is easy to
see that the map $p:\widehat{S}\to G_1$ is proper: If $K\subset G$ is compact and $(x_n,y_n)\in p^{-1}(K)$
is an arbitrary sequence, then $(x_n,y_n)$ has a convergent subsequence with a limit
in $K\times\alpha(\Gamma)\pr_2(\widehat{C})$. In particular $\ker p=\widehat{S}\cap p^{-1}(\{e\})$
is a compact subgroup of $\widehat{G}$ and hence trivial. This means that for every $x\in G_1$ there
exists a unique $\varphi(x)\in G_2$ such that $(x,\varphi(x))\in\widehat{S}$. It is easy to see
that $\varphi:G_1\to G_2$ is a continuous homomorphism. This implies that $\varphi$ is smooth. Since
the image of $\varphi$ contains $\alpha(\Gamma)$, it follows that $\varphi$ is surjective. Finally,
using that $\exp_{G_2}$ is bijective and that $G_1$ is simply connected, one can prove that $\varphi$ is
injective. Hence $\varphi$ is an isomorphism.\\

Let $\varphi:G_1\to G_2$ be an arbitrary continuous homomorphism extending $\alpha$. Then
$\widehat{S}:=\{(x,\varphi(x)):x\in G_1\}$ is a closed connected subgroup of $\widehat{G}$ containing
$\widehat{\Gamma}$. Since $\widehat{S}$ is minimal with this property, it follows from the
preceding theorem that $\widehat{S}$ is the unique syndetic hull of $\widehat{S}$ in $\widehat{G}$.
This proves the uniqueness of $\varphi$.
\end{proof}

In particular, we see that the assumptions on $\alpha$ imply that $\alpha$ is injective.\\

Let $\Gamma$ be a uniform subgroup of a Lie group $G$. Let $\mcX(\Gamma,G)$ denote the set
of all injective continuous homomorphisms $\alpha:\Gamma\to G$ such that $\alpha(\Gamma)$ is uniform
in~$G$, equipped with the topology of compact convergence. The automorphism group of $G$ acts
continuously on $\mcX(\Gamma,G)$ from the left by composition. The orbit space
$\mcD(\Gamma,G):=\Aut(G)\setminus\mcX(\Gamma,G)$ is called the deformation space of the embedding
of $\Gamma$ in $G$.  The preceding theorem states that $\mcD(\Gamma,G)$ consists of a single point
provided that $G$ is completely solvable. In~\cite{BKl}, the deformation space has been studied
for uniform discrete subgroups of arbitrary simply connected solvable Lie groups.\\

The preceding theorem applies in particular to uniform discrete subgroups of $G$, sometimes
called lattices. The existence of a lattice of $G$ implies that $G$ is unimodular. Note that
there exist many examples of exponential solvable Lie groups which admit lattices and which are
not completely solvable.\\

\begin{ex*}
Let $A=\left(\begin{array}{cc} a & b\\ c &d \end{array}\right)\in\SL(2,\ZZ)$. Then $\det A=ad-bc=1$
is the determinant of~$A$, and $\chi_A(\lambda)=\lambda^2-\tr(A)\lambda+1$ the characteristic polynomial.
Clearly $A$ has non-real eigenvalues which are not purely imaginary if and only if $|\tr A|=1$. Let
us fix such a matrix, e.g., $A=\left(\begin{array}{cc} 1 & -1\\ 1 & 0 \end{array}\right)$.
Then there exists $B\in\frakgl(2,\RR)$ such that $\exp(B)=A$. We form the semi-direct
product $G:=\RR^2\ltimes_A\RR$ by means of the one-parameter group $A(t)=\exp(tB)$. 
By definition $G$ is an exponential solvable Lie group which is not completely solvable and
which contains the subgroup $\Gamma:=\ZZ^2\ltimes_A\ZZ$ as a uniform discrete subgroup.
\end{ex*}

Milovanov claims that there exist lattices of exponential solvable Lie group which are not strongly
rigid, see example~2.9 of Starkov's article~\cite{S}.

\subsection{Normalizers and centralizers}

In this section we will prove that normalizers and centralizers of connected Lie subgroups of
completely solvable Lie groups are closed and connected.

\begin{defn}\label{D1_1}
Let $G$ be a Lie group. If $\Gamma$ is a subset of $G$, then the subgroup
\[N_G(\Gamma):=\{x\in G: g\Gamma g^{-1}=\Gamma\}\]
is called the normalizer of $\Gamma$ in $G$, and the subgroup
\[C_G(\Gamma):=\{x\in G:xg=gx\text{ for alle }g\in\Gamma\}\]
is called the centralizer of $\Gamma$ in $G$. In particular $Z(G)=C_G(G)$ is the center of $G$.
Obviously $C_G(\Gamma)$ is closed. Further $N_G(\Gamma)$ is closed provided that $\Gamma$ is closed.
\end{defn}

\vspace{0.5cm}

We begin with three very simple observations.

\begin{lem}\label{L1_1}
Let $G$ and $H$ be topological groups and $\rho:G\to H$ a surjective homomorphism which is also
a quotient map. (This implies that $\rho$ is an open mapping.) Let $Q$ be a subgroup of $H$ and
$P:=\rho^{-1}(Q)$. Then the following holds true:
\begin{enumerate}
\item The restriction $\rho':P\to Q$ of $\rho$ is an open mapping.
\item If $Q$ and $\ker\rho$ are connected, then $P$ is connected.
\end{enumerate}
\end{lem}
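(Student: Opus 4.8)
The plan is to treat the two parts separately, using only elementary point-set topology of topological groups; the openness of $\rho$ recorded in the parenthetical hypothesis is the main tool throughout.

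For (i), I would first note that $\rho(P)=\rho(\rho^{-1}(Q))=Q$, so $\rho'$ is a surjection onto $Q$. The key observation is the set identity $\rho(V\cap P)=\rho(V)\cap Q$ for every open $V\subset G$. The inclusion $\subset$ is immediate since $\rho(P)=Q$; for $\supset$, any $y\in\rho(V)\cap Q$ equals $\rho(v)$ for some $v\in V$, and $y\in Q$ forces $v\in\rho^{-1}(Q)=P$, whence $y\in\rho(V\cap P)$. Now a general open subset of the subspace $P$ has the form $V\cap P$ with $V$ open in $G$; since $\rho$ is open, $\rho(V)$ is open in $H$, so $\rho(V)\cap Q$ is open in the subspace $Q$. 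By the identity this set is exactly $\rho'(V\cap P)$, which proves that $\rho'$ is open.

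For (ii), I would argue by contradiction, exploiting the connectedness of the fibers. First observe that $\ker\rho\subset P$, since $\rho(\ker\rho)=\{e\}\subset Q$; consequently each fiber $(\rho')^{-1}(y)=\rho^{-1}(y)$ with $y\in Q$ is a left coset of $\ker\rho$, hence homeomorphic to $\ker\rho$ and therefore connected by hypothesis. Suppose now that $P=A\cup B$ with $A,B$ disjoint, nonempty, and open in $P$. By part (i) the images $\rho'(A)$ and $\rho'(B)$ are open in $Q$, and they cover $Q=\rho'(P)$. Since $Q$ is connected they cannot be disjoint, so there is a point $y\in\rho'(A)\cap\rho'(B)$. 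The fiber $F:=\rho^{-1}(y)$ then meets both $A$ and $B$; but $F=(F\cap A)\cup(F\cap B)$ exhibits the connected set $F$ as a union of two disjoint, relatively open, nonempty pieces, a contradiction. Hence no such decomposition exists and $P$ is connected.

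The argument is routine; the one place demanding care is part (ii), where the conclusion rests on combining the openness established in (i) with the connectedness of \emph{both} the base $Q$ and the fibers. The standard pitfall is to attempt a direct connectedness or lifting argument without the open-map property; this is avoided by the fiber-intersection argument above, which is precisely the step where the two hypotheses on $Q$ and $\ker\rho$ are used simultaneously.
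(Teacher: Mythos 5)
Your proof is correct and complete: the set identity $\rho(V\cap P)=\rho(V)\cap Q$ gives part (i), and the fiber-plus-base argument (open images of a putative disconnection must overlap over some $y\in Q$, forcing a disconnection of the connected coset $\rho^{-1}(y)$ of $\ker\rho$) gives part (ii). Note that the paper states this lemma as one of ``three very simple observations'' and offers no proof at all, so your argument simply supplies, in the standard way, exactly the routine verification the author left to the reader.
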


\vspace{0.5cm}

\begin{lem}\label{L1_2}
Let $G$ be a Lie group and $U$ a connected Lie subgroup of~$G$. Then $N_G(U)$ is closed.
\end{lem}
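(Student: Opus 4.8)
The plan is to express the normalizer $N_G(U)$ entirely in terms of the adjoint action of $G$ on its Lie algebra $\frakg$, thereby circumventing the fact that the connected (immersed) subgroup $U$ need not be closed. Denote by $\fraku\subset\frakg$ the Lie subalgebra of $U$. For $g\in G$ the inner automorphism $c_g:x\mapsto gxg^{-1}$ is an automorphism of Lie groups whose differential at the identity is $\Ad(g)$. Consequently $gUg^{-1}=c_g(U)$ is again a connected Lie subgroup of $G$, and its Lie algebra is $\Ad(g)\fraku$. Since a connected Lie subgroup is uniquely determined by its Lie subalgebra, the equality $gUg^{-1}=U$ holds if and only if $\Ad(g)\fraku=\fraku$. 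This yields the identification
\[
N_G(U)=\{g\in G:\Ad(g)\fraku=\fraku\}.
\]

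Next I would show that the right-hand side is closed. Consider the stabilizer
\[
\mcS:=\{A\in\GL(\frakg):A\fraku=\fraku\}
\]
of $\fraku$ in $\GL(\frakg)$. The inclusion $A\fraku\subset\fraku$ is a linear condition on $A$, asserting that $A$ maps each of finitely many basis vectors of $\fraku$ into the subspace $\fraku$; hence $\{A\in\End(\frakg):A\fraku\subset\fraku\}$ is a linear, in particular closed, subset of $\End(\frakg)$. Because $\fraku$ is finite-dimensional and every $A\in\GL(\frakg)$ is injective, the inclusion $A\fraku\subset\fraku$ already forces $A\fraku=\fraku$; therefore $\mcS$ is the intersection of $\GL(\frakg)$ with this closed subset and is closed in $\GL(\frakg)$. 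Since $\Ad:G\to\GL(\frakg)$ is continuous and $N_G(U)=\Ad^{-1}(\mcS)$ by the previous paragraph, it follows that $N_G(U)$ is closed in $G$.

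The only genuine subtlety, and the step that does the real work, is the passage from the subgroup level to the Lie-algebra level. A priori Definition~\ref{D1_1} guarantees closedness of $N_G(\Gamma)$ only when $\Gamma$ itself is closed, which $U$ need not be. The essential input is therefore the bijective correspondence between connected Lie subgroups of $G$ and Lie subalgebras of $\frakg$, together with the fact that inner automorphisms act on Lie subalgebras through $\Ad$. Once this is in place, the assertion reduces to the elementary observation that the stabilizer of a subspace is closed in $\GL(\frakg)$.
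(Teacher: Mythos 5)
Your proof is correct and takes essentially the same route as the paper: both identify $N_G(U)$ with $\{g\in G:\Ad(g)\fraku=\fraku\}$ via the connectedness of $U$, and then realize it as the $\Ad$-preimage of the stabilizer of $\fraku$, which is closed. The only cosmetic difference is that you verify closedness of the stabilizer directly in $\GL(\frakg)$ with the Euclidean topology (spelling out the linear-algebra details), whereas the paper cites Zariski-closedness of the corresponding subgroup of $\Aut(\frakg)$.
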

\begin{proof}
Let $N_G(\fraku):=\{x\in G:\Ad(x)\fraku=\fraku\}$. Since $U$ is connected, $N_G(U)=N_G(\fraku)$.
This shows that $N_G(U)$ is closed as the $\Ad$-preimage of the Zariski-closed subgroup
$\{\varphi\in\Aut(\frakg):\varphi(\fraku)=\fraku\}$ of~$\Aut(\frakg)$.
\end{proof}

It is remarkable that for $N_G(U)$ to be closed we do not need that $U$ is closed.

\vspace{0.5cm}

\begin{lem}\label{L1_3}
Let $G$ be a Lie group and $\rho:G\to\GL(V)$ a representation in a vector space $V$ of dimension $n$.
If $X\in\frakg$ is an element of the Lie algebra of $G$ such that $d\rho(X)$ is nilpotent, then
\[d\rho(X)=\log(\rho(\exp(X)))=\sum_{k=1}^n\frac{1}{k}(-1)^{k+1}\;(\rho(\exp(X))-\Id)^k\;.\]
\end{lem}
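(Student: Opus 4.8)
The plan is to reduce everything to the formal power-series identity $\log(\exp(t))=t$ and to use nilpotency to turn all the relevant series into honest polynomials. First I would invoke the naturality of the exponential map for the Lie group homomorphism $\rho:G\to\GL(V)$: since $\rho\circ\exp=\exp\circ\,d\rho$, writing $A:=d\rho(X)$ we obtain $\rho(\exp(X))=\exp(A)=e^A$, where on the right $\exp$ denotes the matrix exponential. Thus the claim is equivalent to the purely linear-algebraic statement that $A=\log(e^A)$, with $\log$ given by the displayed finite series in $e^A-\Id$.

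Next I would exploit nilpotency to truncate the series. Since $A$ is nilpotent and $\dim V=n$, one has $A^n=0$, so $e^A=\sum_{k=0}^{n-1}A^k/k!$ is a polynomial in $A$. Consequently $N:=e^A-\Id$ is a polynomial in $A$ with vanishing constant term, hence itself nilpotent with $N^n=0$. This shows on the one hand that the formal logarithm series $\sum_{k\ge 1}\frac{(-1)^{k+1}}{k}N^k$ terminates, since every term with $k\ge n$ vanishes, so it agrees with the finite sum $\sum_{k=1}^{n}\frac{(-1)^{k+1}}{k}(e^A-\Id)^k$ appearing in the statement; and on the other hand that this sum defines a genuine endomorphism of $V$.

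It then remains to verify $\log(e^A)=A$. For this I would transfer the formal identity $\log(\exp(t))=t$, valid in $\QQ[[t]]$ (where $\log(1+s)=\sum_{k\ge 1}\frac{(-1)^{k+1}}{k}s^k$ is composed with the series $\exp(t)-1$, which has zero constant term), to the operator $A$ by means of the evaluation homomorphism $\QQ[[t]]\to\End(V)$, $t\mapsto A$. Because $A$ is nilpotent, this evaluation is defined on all of $\QQ[[t]]$, only finitely many terms surviving, is a ring homomorphism, and is compatible with substitution of a series having zero constant term. Applying it to both sides of $\log(\exp(t))=t$ therefore yields $\log(e^A)=A$, which is exactly the assertion.

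The step requiring the most care is the last one: one must justify that plugging the nilpotent $A$ into the formal identity is legitimate, that is, that evaluation at $A$ commutes with the formal composition $\log(1+(\exp(t)-1))$. This is precisely where nilpotency is essential, since it guarantees that $\exp(t)$ and $\log(1+s)$ become polynomials upon evaluation, so that the comparison takes place among genuine matrices rather than among formal objects.
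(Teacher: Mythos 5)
Your proof is correct, but it takes a genuinely different route from the paper's. The paper argues structurally: since $d\rho(X)$ is nilpotent it is strictly upper triangular in a suitable basis, and the lemma is then deduced from the known fact (Proposition I.2.7 of~\cite{HN}) that $\exp$ is a bijection from the Lie algebra $\frakt$ of strictly upper triangular endomorphisms onto the unipotent group $T$, with the finite logarithm series as its inverse; the naturality identity $\rho(\exp(X))=\exp(d\rho(X))$ is used implicitly there. You instead make naturality explicit and then prove the inversion yourself: nilpotency gives $A^n=0$, hence $e^A-\Id$ is nilpotent and the logarithm series terminates, and the identity $\log(e^A)=A$ is obtained by pushing the formal identity $\log(\exp(t))=t$ through the evaluation homomorphism $\QQ[[t]]\to\End(V)$, $t\mapsto A$, whose compatibility with substitution of series without constant term is exactly the point you correctly single out as needing care (and which nilpotency settles, since everything descends to $\QQ[t]/(t^n)$). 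What your approach buys is self-containedness: no conjugation to triangular form, no external citation, only formal power-series calculus, with the role of nilpotency clearly visible in both the well-definedness of the evaluation and the termination of the series. What the paper's approach buys is brevity and a conceptual framing: the lemma becomes an instance of the standard diffeomorphism $\exp\colon\frakt\to T$ for unipotent groups. Both arguments are complete modulo the standard facts they respectively invoke.
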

\begin{proof}
Since $d\rho(X)$ is nilpotent, there exists a basis of $V$ with respect to which the
transformation matrix $d\rho(X)$ is strictly upper triangular. Let $\frakt$ be the Lie algebra
of all endomorphisms of $V$ whose transformation matrix is strictly upper diagonal with respect
to this basis, and $T$ the nilpotent Lie group of all operators such that the transformation
matrix of $T-\Id$ is strictly upper triangular. It is known that $\exp:\frakt\to T$ is bijective
and that the (welldefined) logarithm gives the inverse, see Proposition~I.2.7 in~\cite{HN}.
This proves the result.
\end{proof}

As a first step we consider normalizers and centralizers in connected nilpotent Lie groups.

\begin{lem}\label{L1_4}
Let $G$ be a connected nilpotent Lie group.  Then the center $Z(G)$ of $G$ is connected. If $U$
is a connected Lie subgroup of $G$, then the normalizer $N_G(U)$ and the centralizer $C_G(U)$
of $U$ in~$G$ are connected.
\end{lem}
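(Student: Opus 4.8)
The plan is to reduce all three assertions to a single infinitesimal computation based on the nilpotent logarithm of Lemma~\ref{L1_3}, using that $\exp\colon\frakg\to G$ is surjective for a connected nilpotent Lie group and that $\ad(X)$ is nilpotent for every $X\in\frakg$. The heart of the matter is the following observation. Fix $X\in\frakg$ and a linear subspace $\frakw\subseteq\frakg$, and put $T:=\Ad(\exp X)-\Id=e^{\ad X}-\Id$. Then (a) $\Ad(\exp X)\frakw=\frakw$ if and only if $(\ad X)\frakw\subseteq\frakw$, and (b) $\Ad(\exp X)$ restricts to the identity on $\frakw$ if and only if $(\ad X)\frakw=0$. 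In both cases the implication ``$\Leftarrow$'' is immediate from the series $\Ad(\exp X)=e^{\ad X}=\sum_k(\ad X)^k/k!$. For ``$\Rightarrow$'' I would apply Lemma~\ref{L1_3} with $\rho=\Ad$: since $\ad X$ is nilpotent, it equals $\log(\Ad(\exp X))=\sum_{k=1}^n\frac{(-1)^{k+1}}{k}T^k$, a polynomial in $T$ without constant term. Now $\Ad(\exp X)\frakw=\frakw$ forces $T\frakw\subseteq\frakw$, while $\Ad(\exp X)|_\frakw=\id$ means precisely $T|_\frakw=0$; in either situation the same property passes to every power $T^k$ and hence to $\ad X$. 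This transfer of an invariance property from the group level $\Ad(\exp X)$ to the algebra level $\ad X$, via the polynomial expression of the logarithm, is where the only real idea lies, and I expect it to be the main obstacle.

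Granting this, the normalizer assertion is quick. Recall from the proof of Lemma~\ref{L1_2} that, $U$ being connected, $N_G(U)=\{x\in G:\Ad(x)\fraku=\fraku\}$. Given $x\in N_G(U)$, write $x=\exp X$ by surjectivity of $\exp$; then (a) yields $(\ad X)\fraku\subseteq\fraku$, i.e.\ $X\in\frakn:=\{Y\in\frakg:[Y,\fraku]\subseteq\fraku\}$. Conversely $X\in\frakn$ gives $\exp X\in N_G(U)$, again by (a). Hence $N_G(U)=\exp(\frakn)$. Since $\frakn$ is a subalgebra, $\exp(\frakn)$ is exactly the connected nilpotent subgroup with Lie algebra $\frakn$ (here one uses once more that $\exp$ is surjective on that subgroup), so $N_G(U)$ is connected.

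For the centralizer I would first verify that, $U$ being connected, $C_G(U)=\{x\in G:\Ad(x)|_\fraku=\id\}$: if $x$ commutes with every $\exp(tY)$ for $Y\in\fraku$ and $t\in\RR$, then $\exp(t\,\Ad(x)Y)=\exp(tY)$ for all $t$, and differentiating at $t=0$ gives $\Ad(x)Y=Y$; the converse is clear. This one-parameter-subgroup argument avoids any appeal to injectivity of $\exp$, which may fail here. Now exactly as before, writing $x=\exp X$ and applying (b), one obtains $C_G(U)=\exp(\frakc)$ with $\frakc:=\{Y\in\frakg:[Y,\fraku]=0\}$, whence $C_G(U)$ is the connected subgroup with Lie algebra $\frakc$ and thus connected. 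The center is the special case $U=G$, since $Z(G)=C_G(G)$; alternatively, for $x=\exp X\in Z(G)$ one has $\Ad(\exp X)=\Id$, so (b) with $\frakw=\frakg$ gives $\ad X=0$ and $X\in\frakz(\frakg)$, exhibiting $x$ in the identity component of $Z(G)$. I would also note that Lemma~\ref{L1_1} permits an alternative inductive route through the central quotient $G\to G/Z(G)$, but the direct argument above seems cleaner.
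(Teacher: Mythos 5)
Your proposal is correct and takes essentially the same route as the paper's proof: write each element as $\exp X$ (surjectivity of $\exp$ on a connected nilpotent group), use Lemma~\ref{L1_3} with $\rho=\Ad$ to transfer the invariance property from $\Ad(\exp X)$ to the nilpotent operator $\ad X$, and conclude $N_G(U)=\exp(N_\frakg(\fraku))$ and $C_G(U)=\exp(C_\frakg(\fraku))$, which are connected as images of subalgebras. Your explicit polynomial-in-$T$ transfer argument and the one-parameter-subgroup verification of $C_G(U)=\{x\in G:\Ad(x)\,|\,\fraku=\Id\}$ simply spell out steps the paper leaves implicit.
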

\begin{proof}
Let $x\in C_G(U)$. Since the exponential function of $G$ is surjective, there exists an element $X\in\frakg$
with $x=\exp(X)$. From $xgx^{-1}=x$ for all $g\in U$ it follows that $\Ad(\exp(X))\,|\,\fraku=\Id$.
Since $\ad(X)$ is nilpotent, we can apply Lemma~\ref{L1_3} and obtain $\ad(X)\,|\,\fraku=0$. Since
$C_\frakg(\fraku):=\{Y\in\frakg:[Y,\fraku]=0\}$ is a subalgebra of $\frakg$, it follows that
$C_G(U)=\exp(C_\frakg(\fraku))$ is connected.\\

Let $x\in N_G(U)$ and $x=\exp(X)$ for some $X\in\frakg$. Again we apply Lemma~\ref{L1_3}: From $xUx^{-1}=U$
we obtain $\Ad(\exp(X))\cdot\fraku=\fraku$ and hence $\ad(X)\cdot\fraku=[X,\fraku]\subset\fraku$. Hence
$N_G(U)=\exp(N_\frakg(\fraku))$ is connected because $N_\frakg(\fraku):=\{Y\in\frakg:[Y,\fraku]\subset\fraku\}$
is connected as a subalgebra of $\frakg$.
\end{proof}

Now we consider exponential solvable Lie groups.

\begin{defn}\label{D1_2}
A Lie group $G$ is called exponential solvable if $G$ is solvable and the exponential map gives a
diffeomorphism of the Lie algebra $\frakg$ onto the Lie group $G$.
\end{defn}

\begin{defn}\label{D1_3}
A Lie algebra $\frakg$ is called exponential solvable if for every $X\in\frakg$ the eigenvalues of $\ad(X)$
lie in $(\CC\setminus i\RR)\cup\{0\}$.
\end{defn}

The following result goes back to Dixmier~\cite{D}.

\begin{prop}\label{P1_1}
A Lie group $G$ is exponential solvable if and only if $G$ is simply connected (and hence connected)
and its Lie algebra $\frakg$ is exponential solvable.
\end{prop}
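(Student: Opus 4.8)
My plan is to reduce the entire statement to the behaviour of the differential of $\exp$, and then to settle bijectivity by induction on $\dim\frakg$. The starting point is the classical formula
\[
 (d\exp)_X=(dL_{\exp X})_e\circ\frac{\Id-e^{-\ad X}}{\ad X},\qquad
 \frac{\Id-e^{-\ad X}}{\ad X}=\sum_{k=0}^{\infty}\frac{(-1)^{k}}{(k+1)!}\,(\ad X)^{k},
\]
valid on any Lie group. Since $dL_{\exp X}$ is always invertible, $(d\exp)_X$ is singular exactly when the entire function $z\mapsto(1-e^{-z})/z$ vanishes at some eigenvalue of $\ad X$; its zeros are precisely the points of $2\pi i\ZZ\setminus\{0\}$. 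The first step is to record the equivalence: $\frakg$ is exponential solvable in the sense of Definition~\ref{D1_3} \emph{if and only if} $(d\exp)_X$ is invertible for every $X\in\frakg$. One direction is immediate, since a number in $2\pi i\ZZ\setminus\{0\}$ is a nonzero purely imaginary eigenvalue and hence excluded by Definition~\ref{D1_3}. For the converse I would use the homogeneity $\ad(tX)=t\,\ad X$: if some $\ad X_{0}$ had a nonzero purely imaginary eigenvalue $ia$, then $\ad(tX_{0})$ would have the eigenvalue $2\pi i$ for $t=2\pi/a$, so $(d\exp)$ would fail to be invertible at $tX_{0}$.

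Granting this equivalence, the forward implication of the proposition is quick. If $G$ is exponential solvable then $\exp\colon\frakg\to G$ is a diffeomorphism, so $G$ is diffeomorphic to the vector space $\frakg$ and is in particular simply connected; moreover invertibility of every $(d\exp)_X$ gives, by the equivalence, that $\frakg$ is exponential solvable.

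For the reverse implication I would first note that the eigenvalue condition forces $\frakg$ to be solvable: a nonzero Levi factor would contain an element of its maximal compact subalgebra whose $\ad$, already on the semisimple quotient, has a nonzero purely imaginary eigenvalue, contradicting Definition~\ref{D1_3}. Thus $G$ is solvable, and by the equivalence $\exp$ is everywhere a local diffeomorphism; since a bijective local diffeomorphism is a diffeomorphism, it remains only to prove that $\exp$ is a bijection. I would do this by induction on $n=\dim\frakg$. Because $\frakg$ is solvable one can choose a codimension-one ideal $\frakh\supseteq[\frakg,\frakg]$ and write $\frakg=\frakh\oplus\RR X_{0}$. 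The connected subgroup $H=\exp\frakh$ is then normal, closed and simply connected, and $\frakh$ inherits the eigenvalue condition (the eigenvalues of $\ad_{\frakh}Y$ are among those of $\ad_{\frakg}Y$). By induction $\exp_{H}\colon\frakh\to H$ is a diffeomorphism, and the extension $G/H\cong\RR$ splits, giving $G=H\rtimes\RR$ with $\RR$ acting on $\frakh$ by $e^{sD}$, where $D:=\ad X_{0}\,|\,\frakh$.

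The main obstacle is the global bijectivity of $\exp_{G}$ relative to this decomposition. Applying the projection $p\colon G\to G/H\cong\RR$ and using $p\circ\exp_{G}=\exp_{\RR}\circ\,dp$, the $\RR$-coordinate of $\exp_{G}(Y+sX_{0})$ is exactly $s$; writing $\exp_{G}(Y+sX_{0})=h(Y,s)\exp(sX_{0})$, it therefore suffices to prove that for each fixed $s$ the map $Y\mapsto h(Y,s)$ is a bijection of $\frakh$ onto $H$. I expect this to be the heart of the matter. One realises $h(Y,s)$ as the endpoint of the path $t\mapsto\exp(t(Y+sX_{0}))\exp(-tsX_{0})$ in $H$ and must invert the resulting relation in $Y$; here the hypothesis enters decisively, for since $D$ has no nonzero purely imaginary eigenvalue the operator $(\Id-e^{-sD})/(sD)=\int_{0}^{1}e^{-tsD}\,dt$ is invertible for every $s$, which yields both the injectivity and the coercivity/properness needed to conclude that $Y\mapsto h(Y,s)$ is a bijection (a proper local diffeomorphism onto the simply connected $H$ is a covering, hence a diffeomorphism). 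Combining the bijection in the $H$-direction with the identity in the $\RR$-direction shows that $\exp_{G}$ is bijective, hence a diffeomorphism, which closes the induction and the proof.
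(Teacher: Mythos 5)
The paper itself gives no proof of this proposition (it is attributed to Dixmier~\cite{D}), so your argument has to stand on its own, and it has a genuine gap at exactly the point you yourself call the heart of the matter. Everything before that is sound: the equivalence of Definition~\ref{D1_3} with everywhere-invertibility of $d\exp$ (zeros of $(1-e^{-z})/z$ plus the homogeneity trick), the forward implication, solvability of $\frakg$ via the Levi factor, and the reduction to bijectivity of $Y\mapsto h(Y,s)$ for each fixed $s$. But the assertion that invertibility of $(\Id-e^{-sD})/(sD)$ ``yields both the injectivity and the coercivity/properness'' is not an argument, and the operator you invoke is not even the right object: up to translations it is only the differential of $h(\cdot,s)$ at $Y=0$. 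At a general point $Y$ the differential is $g(\ad(Y+sX_0))\,|\,\frakh$ with $g(z)=(1-e^{-z})/z$, so invertibility of \emph{all} these differentials is nothing more than the statement that $\exp_G$ is everywhere a local diffeomorphism, which you already had. A local diffeomorphism between two copies of $\RR^{n-1}$ need not be injective, surjective, or proper; properness is precisely what has to be proved, no proof is offered, and when $\frakh$ is nonabelian the map $h(\cdot,s)$ is genuinely nonlinear, so it cannot be controlled by a single linear operator. Indeed, ``$d\exp$ everywhere regular plus $G$ simply connected solvable implies $\exp$ bijective'' \emph{is} Dixmier's theorem; your outline quietly assumes its hardest part.

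The gap can be closed by turning your decomposition around: induct on a quotient by a nonzero \emph{abelian} ideal instead of passing to a codimension-one subgroup, because then the fibre maps become linear. Since $\frakg$ is solvable, the last nonzero term $\fraka$ of its derived series is a nonzero abelian ideal; the corresponding subgroup $A$ is closed and simply connected, and $\bar{G}=G/A$ is simply connected with $\bar{\frakg}=\frakg/\fraka$ again satisfying the eigenvalue condition. For $Z\in\frakg$ and $W\in\fraka$ the left logarithmic derivative of $t\mapsto\exp(-Z)\exp(Z+tW)$ equals $g(\ad(Z+tW))\cdot W=g(\ad Z\,|\,\fraka)\cdot W$, because $\ad(Z+tW)\,|\,\fraka=\ad Z\,|\,\fraka$ when $\fraka$ is an abelian ideal; hence one has the exact, linear-in-$W$ formula
\[
\exp(Z+W)=\exp(Z)\exp\bigl(\Lambda(Z)W\bigr),\qquad
\Lambda(Z)=\int_0^1 e^{-t\,\ad Z}\big|_{\fraka}\,dt\;,
\]
and $\Lambda(Z)$ is invertible precisely by the eigenvalue condition. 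Bijectivity of $\exp_{\bar{G}}$ (induction) together with bijectivity of each fibre map $W\mapsto\exp(Z)^{-1}\exp(Z+W)=\exp\bigl(\Lambda(Z)W\bigr)$ then gives bijectivity of $\exp_G$: surjectivity by lifting a logarithm from $\bar{G}$ and correcting inside the coset $\exp(Z)A$, injectivity because $\exp V_1=\exp V_2$ forces $\bar{V}_1=\bar{V}_2$, hence $V_2=V_1+W$ with $W\in\fraka$ and then $\Lambda(V_1)W=0$, i.e.\ $W=0$. Combined with the everywhere-regularity of $d\exp$ this makes $\exp$ a diffeomorphism. This is essentially Dixmier's argument, and it is exactly the step your proposal leaves unproved.
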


An immediate consequence is that any connected Lie group whose Lie algebra is exponential solvable is
covered by a (unique) exponential solvable Lie group (namely by its universal covering).\\

Saito proved in th\'eor\`eme 2 of \cite{S1} that the center of an exponential solvable Lie group is connected.
Corwin and Moskowitz proved the connectedness of $Z(G)$ as a consequence of the multiplicative exponential property
satisfied by exponential solvable Lie groups, see corollary~4.7 and theorem~4.3 in~\cite{CM}. However, the
assumption that $G$ is simply connected is not necessary.

\begin{lem}\label{L1_5}
Let $G$ be a connected Lie group whose Lie algebra is exponential solvable. Then the center $Z(G)$
of $G$ is connected.
\end{lem}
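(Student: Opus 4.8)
The plan is to reduce the assertion to the simply connected case, which is covered by Saito's theorem cited above. Let $\pi\colon\widetilde{G}\to G$ be the universal covering homomorphism and put $\Delta:=\ker\pi$, a discrete central subgroup of $\widetilde{G}$. By the remark following Proposition~\ref{P1_1}, the group $\widetilde{G}$ is exponential solvable, so Saito's theorem applies and $Z(\widetilde{G})$ is connected. Since $\pi$ is a continuous surjection, it then suffices to prove the identity $Z(G)=\pi\bigl(Z(\widetilde{G})\bigr)$; the right-hand side is connected as the continuous image of a connected set.

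First I would record that for a connected Lie group one has $Z(G)=\ker\bigl(\Ad\colon G\to\GL(\frakg)\bigr)$. Indeed, if $g\in Z(G)$ then conjugation $c_g$ is the identity map, whence $\Ad(g)=\Id$. Conversely, if $\Ad(g)=\Id$, then $c_g\circ\exp=\exp\circ\Ad(g)=\exp$ shows that $c_g$ agrees with the identity on a neighbourhood of $e$; as $G$ is connected and therefore generated by this neighbourhood, $c_g=\id$ and $g$ is central. The same identity holds verbatim for $\widetilde{G}$.

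The heart of the argument is the equality $\pi^{-1}(Z(G))=Z(\widetilde{G})$. The inclusion $\supseteq$ is clear, since $\pi$ is surjective and so carries central elements to central elements. For $\subseteq$, suppose $\pi(\tilde g)\in Z(G)$. Then the commutator map $\tilde h\mapsto[\tilde g,\tilde h]$ sends $\widetilde{G}$ into $\Delta$; since $\widetilde{G}$ is connected, this continuous map has connected image containing $e$, hence it is constantly $e$, i.e. $\tilde g\in Z(\widetilde{G})$. Equivalently, because $d\pi$ is the identity on $\frakg$ one has $\Ad_G\circ\pi=\Ad_{\widetilde{G}}$, so $\pi^{-1}(\ker\Ad_G)=\ker\Ad_{\widetilde{G}}$, and the previous step identifies both kernels with the respective centres. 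Applying $\pi$ and using surjectivity yields $Z(G)=\pi\bigl(\pi^{-1}(Z(G))\bigr)=\pi\bigl(Z(\widetilde{G})\bigr)$, which is connected.

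The only genuine obstacle is the inclusion $\pi^{-1}(Z(G))\subseteq Z(\widetilde{G})$, i.e. ruling out central elements of $G$ that fail to lift to central elements of $\widetilde{G}$. This is exactly the point at which the discreteness of the covering kernel $\Delta$ together with the connectedness of $\widetilde{G}$ is indispensable; all remaining steps are formal and use only that $\widetilde{G}$ is exponential solvable and that the simply connected case is already known.
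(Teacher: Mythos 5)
Your proof is correct, but it takes a genuinely different route from the paper's. The paper argues directly, without invoking Saito's theorem: since $\exp$ is surjective on a connected group whose Lie algebra is exponential solvable, any $x\in Z(G)$ can be written as $x=\exp(X)$; from $\Ad(\exp(X))=\Id$ one shows, via Lie's theorem applied to $\ad(\frakg)$ acting on $\frakg_\CC$ and the hypothesis that the eigenvalues of $\ad(X)$ avoid $i\RR\setminus\{0\}$, that $\ad(X)$ is nilpotent, and then Lemma~\ref{L1_3} forces $\ad(X)=0$; this yields the finer statement $Z(G)=\exp(Z(\frakg))$. Your argument instead treats the simply connected case as a black box (Saito's th\'eor\`eme~2, which the paper cites as known) and reduces to it by the purely topological identity $\pi^{-1}(Z(G))=Z(\widetilde{G})$, valid for any connected Lie group because $\ker\pi$ is discrete and $\widetilde{G}$ is connected; hence $Z(G)=\pi\bigl(Z(\widetilde{G})\bigr)$ is connected. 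Both halves of your reduction are sound: the commutator map into the discrete kernel $\Delta$ has connected image containing $e$ and is therefore trivial, and the alternative identification via $\Ad\circ\pi=\Ad$ and $Z=\ker\Ad$ for connected groups is equally valid. What your route buys is brevity and generality: it isolates a covering-space argument that works for any class of groups once the simply connected case is known. What the paper's route buys is self-containedness and reusability: it reproves the simply connected case rather than citing it, produces the explicit description $Z(G)=\exp(Z(\frakg))$, and its core computation (nilpotency of $\ad(X)$ from the spectral condition plus Lemma~\ref{L1_3}) is recycled almost verbatim in the proof of Proposition~\ref{P1_3} for centralizers, which is why the author presents it in this form.
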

\begin{proof}
Let $x\in Z(G)$ be arbitrary. Since the exponential function of $G$ is surjective, there exists
$X\in\frakg$ with $x=\exp(X)$.  Since $xgx^{-1}=g$ for all $g\in G$, it follows $\Ad(\exp(X))=\Id$.\\

By Lie's theorem we can think of $\ad(\frakg)\,|\,\frakg_\CC$ as a Lie algebra of upper triangular
complex matrices. Suppose that $\ad(X)$ is not nilpotent so that at least one entry on the diagonal
of $\ad(X)$ is non-zero. Since $\frakg$ is exponential, there exists $\lambda\in\CC\setminus i\RR$,
an $\ad(X)$-invariant subspace $W$ of $\frakg_\CC$ and a vector $Z\in\fraku_\CC\setminus W$ such that
\[\ad(X)\cdot Z\equiv \lambda Z\quad (\text{mod } W)\;.\]
This implies $\Ad(\exp(X))\cdot Z\equiv e^\lambda Z\;(\text{mod }W)$, a contradiction. This proves that
$\ad(X)$ is nilpotent. By Lemma~\ref{L1_3} it follows $\ad(X)=0$. Consequently $Z(G)=\exp(Z(\frakg))$
is connected.
\end{proof}

V.\ V.\ Gorbatsevich used in \cite{G2} that normalizers of connected Lie subgroups of linear Lie
groups of triangular matrices are connected, see Lemma~1 of~\cite{G2}, and refers to~\cite{G1} for
a proof which does not seem to exist.\\

\begin{prop}\label{P1_3}
Let $G$ be a connected Lie group whose Lie algebra is exponential solvable. If $U$ is a connected Lie
subgroup of $G$ such that the normalizer $N_G(U)=\exp(N_{\frakg}(\fraku))$ is connected, then the
centralizer $C_G(U)$ of $U$ in $G$ is connected, too. The connectedness of $N_G(U)$ is guaranteed
if $G$ is completely solvable.
\end{prop}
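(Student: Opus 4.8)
The plan is to prove both assertions by establishing the sharper statements $C_G(U)=\exp(C_\frakg(\fraku))$ in the first case and $N_G(U)=\exp(N_\frakg(\fraku))$ in the second; in either case the group in question is then connected, being the continuous image of the vector space $C_\frakg(\fraku)$, resp.\ $N_\frakg(\fraku)$, under $\exp$. In both cases one inclusion is immediate from the identity $\Ad\circ\exp=\exp\circ\ad$: if $X\in C_\frakg(\fraku)$ then $\ad(X)\,|\,\fraku=0$, so $\Ad(\exp X)\,|\,\fraku=\Id$ and $\exp X\in C_G(U)$; similarly $\exp(N_\frakg(\fraku))\subset N_G(U)$. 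The substance of the proof lies in the reverse inclusions.

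For the first assertion I would follow the pattern of Lemma~\ref{L1_4}. Let $x\in C_G(U)$. By hypothesis $C_G(U)\subset N_G(U)=\exp(N_\frakg(\fraku))$, so I may write $x=\exp(X)$ with $X\in N_\frakg(\fraku)$; in particular $\ad(X)$ preserves $\fraku$ and $\exp(\ad(X)\,|\,\fraku)=\Ad(x)\,|\,\fraku=\Id$. The key point is that $\ad(X)\,|\,\fraku$ is nilpotent: its eigenvalues form a subset of those of $\ad(X)$ and therefore lie in $(\CC\setminus i\RR)\cup\{0\}$, while $\exp(\ad(X)\,|\,\fraku)=\Id$ forces every such eigenvalue into $2\pi i\ZZ\subset i\RR$; hence they all vanish. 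Applying Lemma~\ref{L1_3} to the representation $g\mapsto\Ad(g)\,|\,\fraku$ of the connected group $N_G(U)$ now yields $\ad(X)\,|\,\fraku=\log(\Id)=0$, so that $X\in C_\frakg(\fraku)$ and $x\in\exp(C_\frakg(\fraku))$.

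For the second assertion I would use that a completely solvable $G$ is exponential and that all eigenvalues of $\ad(X)$ are real. Given $x\in N_G(U)$, write $x=\exp(X)$ with $X\in\frakg$ uniquely; then $B:=\Ad(x)=\exp(\ad(X))$ preserves $\fraku$. Since the eigenvalues of $\ad(X)$ are real, $B$ has positive real spectrum and $\ad(X)$ is its principal logarithm. The decisive fact is that the principal logarithm of an operator with positive real spectrum is a polynomial in that operator (obtained by interpolating $\log$ on the spectrum); consequently $\ad(X)=\log(B)$ preserves every $B$-invariant subspace, in particular $\fraku$. Hence $X\in N_\frakg(\fraku)$ and $x\in\exp(N_\frakg(\fraku))$.

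The main obstacle is exactly this last step, and it is where complete solvability is indispensable. One must know that $\exp$ can be inverted on $B=\Ad(x)$ by a logarithm that respects the invariant subspace $\fraku$; the reality of the eigenvalues both guarantees $\log(\exp(\ad X))=\ad(X)$ (ruling out the $2\pi i$ ambiguity) and makes $\log$ available as a polynomial in $B$. For a merely exponential solvable $G$ the eigenvalues of $\ad(X)$ may be genuinely complex, the branch of $\log$ becomes ambiguous, and the conclusion $\ad(X)\fraku\subset\fraku$ can fail --- which is precisely why the connectedness of $N_G(U)$ must be assumed in the general case rather than proved.
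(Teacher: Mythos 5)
Your treatment of the first assertion is essentially the paper's own proof: both arguments use the hypothesis $C_G(U)\subset N_G(U)=\exp(N_\frakg(\fraku))$ to write $x=\exp(X)$ with $X\in N_\frakg(\fraku)$, deduce that $\ad(X)\,|\,\fraku$ is nilpotent from exponential solvability, and finish with Lemma~\ref{L1_3}. (The paper phrases the nilpotence step as a contradiction, producing an eigenvalue $\lambda\in\CC\setminus i\RR$ with $e^\lambda\neq 1$ via Lie's theorem; your spectral-mapping version --- the eigenvalues lie in $\bigl((\CC\setminus i\RR)\cup\{0\}\bigr)\cap 2\pi i\ZZ=\{0\}$ --- is the same computation, if anything stated more cleanly.)

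The genuine divergence is in the second assertion. The paper does not prove it: it cites the corollary of proposition~5 of Saito~\cite{S2} for the fact that $N_G(U)=\exp(N_\frakg(\fraku))$ is connected when $G$ is completely solvable. You instead give a direct argument: writing $x=\exp(X)$ (legitimate, since completely solvable groups are exponential), the reality of the eigenvalues of $\ad(X)$ makes $\ad(X)$ the principal logarithm of $B=\Ad(x)$, and the principal logarithm of an operator with positive real spectrum is a primary matrix function of it, hence a polynomial in $B$, hence preserves the $B$-invariant subspace $\fraku$. Both facts you invoke are standard and correct: uniqueness of the logarithm with spectrum in the strip $\abs{\Imag z}<\pi$ follows because any two such logarithms commute with $B$ and hence (after comparing with the primary one) differ by a semisimple operator with eigenvalues in $2\pi i\ZZ$ of modulus less than $2\pi$, i.e.\ by zero; the polynomial expression comes from Hermite interpolation on the Jordan structure, or equivalently from the Jordan decomposition $B=\exp(S)\exp(N)$ whose factors are themselves polynomials in $B$. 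So your proof is sound, and it buys self-containedness exactly where the paper leans on an external reference. One minor caveat: your closing claim that the conclusion ``can fail'' for exponential but not completely solvable $G$ is only a statement that your argument breaks down (the branch ambiguity); it is not a counterexample, and the paper itself is careful only to say that connectedness of $N_G(U)$ must be \emph{assumed} in that generality.
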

\begin{proof}
Let $x\in C_G(U)$. Since $C_G(U)\subset N_G(U)=\exp(\,N_{\frakg}(\fraku)\,)$, there is $X\in N_\frakg(\fraku)$
with $x=\exp(X)$. It holds $[X,\fraku]\subset\fraku$ and $\Ad(\exp(X))\,|\,\fraku=\Id$.  We repeat the
argument of the proof of Lemma~\ref{L1_5}: Think of $\ad(N_\frakg(\fraku))\,|\,\fraku_\CC$ as a Lie
algebra of upper triangular complex matrices and suppose that $\ad(X)\,|\,\fraku$ is not nilpotent.
Then there is $\lambda\in\CC\setminus i\RR$, an $\ad(X)$-invariant subspace $W$ of $\fraku_\CC$ and
$Z\in\fraku_\CC$ with $Z\not\in W$ and $\ad(X)\cdot Z\equiv \lambda Z\;(\text{mod }W)$. This implies
$\Ad(\exp(X))\cdot Z\equiv e^\lambda Z\;(\text{mod }W)$, a contradiction. Thus $\ad(X)$ is nilpotent.
By Lemma~\ref{L1_3} it follows $\ad(X)\,|\,\fraku=0$ so that $X\in C_\frakg(\fraku)$. This proves
that $C_G(U)=\exp(C_\frakg(\fraku))$ is connected.\\

Under the additional assumption that $G$ is completely solvable, it was proved in (the corollary) of
proposition~5 of~\cite{S2} that $N_G(U)=\exp(N_{\frakg}(\fraku))$ is connected.
\end{proof}

\subsection{Syndetic hulls in completely solvable Lie groups}
 
In this section we will prove that any closed subgroup $\Gamma$ of a completely solvable Lie group
has a unique syndetic hull.\\

First we consider the case of nilpotent groups. We will use the following result proved in
chapter~2 of~\cite{R}.

\begin{prop}\label{P2_1}
Let $G$ be a connected unipotent Lie subgroup of $\GL(n,\RR)$. Then $G$ is simply connected. Moreover,
$G$ is Zariski-closed and hence an algebraic group. Further let  $\Gamma$ be a closed subgroup
of $G$. Then $\Gamma\setminus G$ is compact if and only if $G$ is contained in the Zariski-closure
of $\Gamma$ in $\GL(n,\RR)$.
\end{prop}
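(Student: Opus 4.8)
The plan is to first put the embedding into a normal form, then dispatch the two structural assertions (simple connectedness and Zariski-closedness) together, and finally prove the cocompactness criterion by treating its two directions separately: the ``only if'' direction by a dimension count, the ``if'' direction by induction on $\dim G$. For the normal form, observe that since every element of $G$ is unipotent, each one-parameter subgroup $t\mapsto\exp(tX)$ consists of unipotent matrices, so every eigenvalue $\lambda$ of $X\in\frakg$ satisfies $e^{t\lambda}=1$ for all $t$ and hence $X$ is nilpotent. By Engel's theorem $\frakg$ can be conjugated into the strictly upper triangular matrices $\fraku_n$, so $G$ lies in the group $\mcU_n$ of upper unitriangular matrices, on which $\exp$ and $\log$ are mutually inverse polynomial bijections onto $\fraku_n$ (each a terminating series, cf.\ Lemma~\ref{L1_3}).

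Since $\frakg$ is a nilpotent Lie subalgebra of $\fraku_n$, the Baker--Campbell--Hausdorff series terminates and takes values in $\frakg$; hence $\exp(\frakg)$ is a subgroup, and being connected it coincides with the analytic subgroup $G$. Thus $\exp\colon\frakg\to G$ is a diffeomorphism and $G\cong\frakg\cong\RR^{\dim G}$ is simply connected. Moreover $G=\{u\in\mcU_n:\log u\in\frakg\}$; since $\log$ is polynomial and $\frakg$ is cut out of $\fraku_n$ by linear equations, while unipotence is itself a Zariski-closed condition on $\GL(n,\RR)$, the subgroup $G$ is Zariski-closed and hence algebraic.

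For the ``only if'' direction, set $A:=\overline{\Gamma}^{\,\mathrm{Zar}}$, the Zariski closure of $\Gamma$ in $\GL(n,\RR)$. As $G$ is Zariski-closed and contains $\Gamma$ we have $A\subseteq G$; being a real unipotent algebraic group $A$ is connected, and $\Gamma\subseteq A$. The natural projection $\Gamma\setminus G\to A\setminus G$ is continuous and surjective, so compactness of $\Gamma\setminus G$ forces compactness of $A\setminus G$. But $A$ is a closed connected subgroup of the simply connected nilpotent group $G$, whence $A\setminus G$ is diffeomorphic to $\RR^{\dim G-\dim A}$; compactness then gives $\dim A=\dim G$ and therefore $A=G$, which is precisely the inclusion $G\subseteq\overline{\Gamma}^{\,\mathrm{Zar}}$.

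Conversely assume $\Gamma$ is Zariski-dense in $G$ and argue by induction on $\dim G$. Choose a one-dimensional central subgroup $Z\cong\RR$ and set $\bar G:=G/Z$ with projection $\pi$. The image $\pi(\Gamma)$ is Zariski-dense in the lower-dimensional unipotent group $\bar G$, so passing to its Euclidean closure $\overline{\pi(\Gamma)}$ and applying the induction hypothesis shows that $\overline{\pi(\Gamma)}\setminus\bar G$ is compact. It then remains to control the central direction, i.e.\ to prove that $\Gamma$ is cocompact in $\pi^{-1}\!\big(\overline{\pi(\Gamma)}\big)$. I expect this to be the main obstacle: even when $\Gamma$ is Zariski-dense the intersection $\Gamma\cap Z$ may be trivial, so one cannot reason naively inside the central fibre. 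The resolution is to track how the closure taken in $\bar G$ interacts with the central extension and to use Zariski-density to exclude a ``drift'' along $Z$ that would render the quotient noncompact; this bookkeeping is the technical heart of the argument, carried out in chapter~2 of~\cite{R}.
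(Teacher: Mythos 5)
The paper does not prove Proposition~\ref{P2_1} at all --- it imports it wholesale from chapter~2 of~\cite{R} --- so your attempt cannot be measured against an argument in the paper, only against completeness. On that score, what you do prove is correct. The reduction to the unitriangular group $\mcU_n$ (eigenvalues of $\exp(tX)$, then Engel's theorem), the identity $G=\exp(\frakg)=\{u\in\mcU_n:\log u\in\frakg\}$ via the terminating Baker--Campbell--Hausdorff series, and the resulting simple connectedness and Zariski-closedness are all fine. The ``only if'' direction is also complete: the Zariski closure $A$ of $\Gamma$ is a Euclidean-closed subgroup of $G$ consisting of unipotent elements, hence connected (each $a\in A$ lies on the one-parameter group $\exp(t\log a)$, which is contained in the Zariski closure of $\langle a\rangle$ and so in $A$); the continuous surjection $\Gamma\setminus G\to A\setminus G$ transfers compactness; and $A\setminus G\cong\RR^{\dim G-\dim A}$ then forces $A=G$. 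These parts go beyond what the paper itself supplies.

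The genuine gap is the ``if'' direction, and you flag it yourself: you reduce it, by induction along a central one-parameter subgroup $Z$ with projection $\pi$, to the claim that $\Gamma$ is cocompact in $\pi^{-1}\bigl(\overline{\pi(\Gamma)}\bigr)=\overline{\Gamma Z}$, and then declare this ``bookkeeping'' to be carried out in~\cite{R}. That step is not bookkeeping; it is the theorem. All of the difficulty --- using Zariski-density to rule out configurations such as $\Gamma\cap Z=\{e\}$ with $\Gamma Z$ non-closed, i.e.\ the ``drift'' you mention --- lives exactly there, and nothing in the sketch engages with it. Note also that this is precisely the implication the paper leans on: in Theorem~\ref{T2_1}, existence of syndetic hulls is obtained by applying Proposition~\ref{P2_1} to $\dot{\Gamma}$ inside its Zariski closure $\dot{S}$ to conclude that $\dot{\Gamma}\setminus\dot{S}$ is compact, which is the direction you leave to the reference. (A secondary loose end: to apply your inductive hypothesis to $\bar G=G/Z$ you need Zariski-density to be intrinsic, independent of the chosen unipotent embedding of $\bar G$; this is true because $\exp$ and $\log$ are polynomial for unipotent groups, but it should be said.) In sum: correct and complete on the structural statements and on one half of the equivalence, but the load-bearing half remains, as in the paper, a citation rather than a proof.
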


Using this proposition we can prove

\begin{thm}\label{T2_1}
Let $G$ be a connected nilpotent Lie group and $\Gamma$ a closed subgroup of $G$.
Then there exists a syndetic hull of $\Gamma$ in~$G$. The syndetic hull of $\Gamma$ is
unique if $G$ is simply connected.
\end{thm}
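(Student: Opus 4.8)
The plan is to handle the simply connected case first, where both existence and uniqueness will follow directly from Proposition~\ref{P2_1} via Zariski closures, and then to deduce existence in the general connected case by descent from the universal covering.

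First I would reduce the simply connected case to a matrix situation. Since $G$ is simply connected nilpotent, I embed $\frakg$ into the strictly upper triangular matrices $\frakn\subset\frakgl(n,\RR)$ and exponentiate, realizing $G$ as a connected unipotent Lie subgroup of $\GL(n,\RR)$; by Proposition~\ref{P2_1} this $G$ is Zariski-closed. Let $\overline{\Gamma}$ denote the Zariski closure of $\Gamma$ in $\GL(n,\RR)$. Because the unipotent elements form a Zariski-closed set and $\Gamma\subset G$ with $G$ Zariski-closed, $\overline{\Gamma}$ is a unipotent algebraic subgroup contained in $G$; in characteristic zero a unipotent algebraic group is Zariski-connected and its real points form a connected unipotent Lie group. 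I therefore set $S:=\overline{\Gamma}(\RR)$, a connected unipotent Lie subgroup of $\GL(n,\RR)$ with $\Gamma\subset S\subset G$. Since $S\subset\overline{\Gamma}$, the group $\Gamma$ is Zariski-dense in $S$, so the compactness criterion of Proposition~\ref{P2_1} applied to the pair $(\Gamma,S)$ gives that $\Gamma\setminus S$ is compact. Thus $S$ is a syndetic hull.

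For uniqueness in the simply connected case I would argue by two inclusions. Let $S$ be \emph{any} syndetic hull of $\Gamma$. Being connected unipotent, $S$ is Zariski-closed by Proposition~\ref{P2_1}, so from $\Gamma\subset S$ I get $\overline{\Gamma}\subset S$, hence $\overline{\Gamma}(\RR)\subset S$. Conversely, compactness of $\Gamma\setminus S$ together with the ``only if'' direction of Proposition~\ref{P2_1} forces $S\subset\overline{\Gamma}$, hence $S\subset\overline{\Gamma}(\RR)$. Therefore $S=\overline{\Gamma}(\RR)$ is uniquely determined by $\Gamma$.

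It remains to establish existence for an arbitrary connected nilpotent $G$, where uniqueness is not claimed. Let $\pi:\widetilde{G}\to G$ be the universal covering, with $\widetilde{G}$ simply connected nilpotent and $\ker\pi$ a discrete central subgroup. Put $\widetilde{\Gamma}:=\pi^{-1}(\Gamma)$, which is closed and contains $\ker\pi$. By the simply connected case $\widetilde{\Gamma}$ admits a syndetic hull $\widetilde{S}$. Since $\ker\pi\subset\widetilde{\Gamma}\subset\widetilde{S}$, one has $\pi^{-1}(\pi(\widetilde{S}))=\widetilde{S}$, so $S:=\pi(\widetilde{S})$ is a connected Lie subgroup with $\pi^{-1}(S)=\widetilde{S}$ and $\Gamma\subset S$; the map induced by $\pi$ from $\widetilde{\Gamma}\setminus\widetilde{S}$ to $\Gamma\setminus S$ is a homeomorphism (well defined, and bijective precisely because $\ker\pi\subset\widetilde{\Gamma}$), whence $\Gamma\setminus S$ is compact and $S$ is a syndetic hull. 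The hard part is the simply connected existence, namely lining up the two Zariski-density statements so that Proposition~\ref{P2_1} is applicable in both directions; once that is secured, the uniqueness argument and the descent through the covering are purely formal, and the failure of uniqueness in the non-simply-connected case is consistent with the hypothesis in the statement.
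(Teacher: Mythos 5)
Your proof is correct and takes essentially the same route as the paper: realize the simply connected $G$ as a connected unipotent matrix group via a faithful representation, take the Zariski closure of $\Gamma$ and apply Proposition~\ref{P2_1} in both directions to get existence and uniqueness, then descend through the universal covering for the general connected case. The only difference is that you spell out the standard facts (Zariski-connectedness of unipotent closures, Euclidean connectedness of their real points) that the paper uses implicitly.
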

\begin{proof}
First we consider the case that $G$ is simply connected. Then we can choose a faithful
representation $\rho:G\to\GL(n,\RR)$ such that $\dot{G}:=\rho(G)$ is a unipotent subgroup of $\GL(n,\RR)$.
Define $\dot{\Gamma}=\rho(\Gamma)$. Let $\dot{S}=\underline{\dot{\Gamma}}$ denote the Zariski-closure
of $\dot{\Gamma}$ in $\GL(n,\RR)$. Applying Proposition~\ref{P2_1} to $\dot{\Gamma}\subset\dot{S}$,
we find that $\dot{\Gamma}\setminus\dot{S}$ is compact. Since $\rho:G\to\dot{G}$ is a topological
equivalence, it follows that $\Gamma\setminus S\cong\dot{\Gamma}\setminus\dot{S}$ is compact, too.
This means that $\dot{S}$ is a syndetic hull for $\dot{\Gamma}$ in $\dot{G}$ and that $S$
is a syndetic hull for $\Gamma$ in $G$. This proves the existence. Let $S$ be any
closed subgroup of $G$ with $\Gamma\subset S$ and $\Gamma\setminus S$ compact. Put
$\dot{\Gamma}:=\rho(\Gamma)$ and $\dot{S}:=\rho(S)$. Then $\dot{\Gamma}\setminus\dot{S}$ is compact.
By Proposition~\ref{P2_1} we know that $\dot{S}$ equals the Zariski-closure
of $\Gamma$ and that $S=\rho^{-1}(\underline{S})$. This proves the uniqueness.\\

Let $G$ be an arbitrary connected nilpotent Lie group and $p:\tilde{G}\to G$ its
universal covering. Let $\Gamma\subset G$ be closed. Define $\tilde{\Gamma}:=p^{-1}(\Gamma)$.
Then we know that there exists a closed subgroup $\tilde{S}$ of $\tilde{G}$ with
$\tilde{\Gamma}\subset\tilde{S}$ and $\tilde{\Gamma}\setminus\tilde{S}$ compact. Put $S:=p(\tilde{S})$.
Since $\Gamma\setminus S\cong\tilde{\Gamma}\setminus\tilde{S}$, we see that $S$ is a syndetic
hull for $\Gamma$ in $G$.
\end{proof}

Now we study the case of completely solvable Lie groups. To prove Theorem~\ref{T2_2}
we need the following two lemmata.

\begin{lem}\label{L2_1}
Let $G$ be a connected solvable Lie group whose Lie algebra is exponential solvable.
Let $\rho:G\to\GL(V)$ be a continuous finite-dimensional representation such that for
every $X\in\frakg$ the eigenvalues of $d\rho(X)$ lie in $\{0\}\cup(\CC\setminus i\RR)$. If
$X\in\frakg$ and $v\in V$ such that $\rho(\exp(X))\cdot v=v$, then $d\rho(X)\cdot v=0$.
\end{lem}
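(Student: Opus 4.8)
The plan is to reduce everything to a single linear operator. Set $A:=d\rho(X)$ on $V$. Because $\rho$ is a homomorphism of Lie groups we have the fundamental relation $\rho(\exp X)=\exp(A)$, so the hypothesis reads $\exp(A)\cdot v=v$ and the desired conclusion is $A\cdot v=0$; the group $G$ then plays no further role and only the spectral assumption on $A$ matters. I would work over the complexification $V_\CC$ and split $A$ according to its generalized eigenspaces, using the condition that the eigenvalues of $A$ avoid $i\RR\setminus\{0\}$ to show that the fixed space of $\exp(A)$ can only be fed by the generalized eigenspace for the eigenvalue $0$, on which $A$ is nilpotent.

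Concretely, I would write $V_\CC=\bigoplus_{\lambda}V_\lambda$, where $V_\lambda$ is the generalized eigenspace of $A$ for the eigenvalue $\lambda$ and $\lambda$ ranges over the spectrum of $A$. Each $V_\lambda$ is $A$-invariant, hence invariant under $\exp(A)$, and on $V_\lambda$ the operator $\exp(A)$ has the single eigenvalue $e^\lambda$. For $\lambda\neq 0$ the hypothesis forces $\lambda\in\CC\setminus i\RR$, so $\Real\lambda\neq 0$ and therefore $\lvert e^\lambda\rvert=e^{\Real\lambda}\neq 1$; in particular $e^\lambda\neq 1$, which means $\exp(A)-\Id$ is invertible on $V_\lambda$. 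For $\lambda=0$ the operator $A$ restricts to a nilpotent endomorphism $N$ of $V_0$, and writing $\exp(A)-\Id=N\bigl(\Id+\tfrac12 N+\cdots\bigr)$ with the second factor invertible (or invoking Lemma~\ref{L1_3} on $V_0$) gives $\ker(\exp(A)-\Id)\cap V_0=\ker N=\ker A\cap V_0$. Combining the two cases yields $\ker(\exp(A)-\Id)=\ker A\cap V_0\subset\ker A$.

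The hypothesis $\exp(A)\cdot v=v$ says exactly that $v\in\ker(\exp(A)-\Id)$, so the inclusion just obtained gives $A\cdot v=0$, first in $V_\CC$; since $A$ preserves the real form $V$ and $v\in V$, the identity $d\rho(X)\cdot v=0$ already holds in $V$. The step I expect to be the crux is the spectral dichotomy—verifying that $e^\lambda\neq 1$ for every nonzero eigenvalue $\lambda$—because this is the one place where the exclusion of nonzero purely imaginary eigenvalues is essential; if a nonzero eigenvalue were allowed to be purely imaginary, $e^\lambda$ could equal $1$ and the conclusion would fail. An alternative would be to mimic the Lie-theorem triangularization argument used in the proofs of Lemma~\ref{L1_5} and Proposition~\ref{P1_3}, but the generalized-eigenspace formulation seems the most transparent here.
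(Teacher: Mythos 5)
Your proof is correct. The paper itself does not write out a proof of Lemma~\ref{L2_1}: it only remarks that the argument is ``similar to that of Proposition~\ref{P1_3}'' (and points to Leptin--Ludwig), meaning one triangularizes $d\rho(\frakg)$ on $V_\CC$ via Lie's theorem, shows by contradiction that the relevant operator is nilpotent (a non-nilpotent one would produce an eigenvalue $\lambda\in\CC\setminus i\RR$ with $e^\lambda=1$), and then kills it with the logarithm formula of Lemma~\ref{L1_3}. Your generalized-eigenspace decomposition rests on exactly the same two pillars --- $e^\lambda\neq 1$ whenever $\Real\lambda\neq 0$, and $\ker(\exp N-\Id)=\ker N$ for nilpotent $N$ --- but it is organized differently and is better adapted to the pointwise hypothesis: Proposition~\ref{P1_3} assumes the \emph{whole} restriction $\Ad(\exp(X))\,|\,\fraku$ is the identity, so transplanting that template to Lemma~\ref{L2_1} requires the extra observation that the fixed space $K=\ker(\rho(\exp(X))-\Id)$ is $d\rho(X)$-invariant (because $d\rho(X)$ commutes with $\exp(d\rho(X))$) before the triangularization argument can be run on $K$. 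Your route sidesteps that step, uses no Lie theory at all (it is a statement about a single operator, as you correctly note --- the group $G$ only enters through the spectral hypothesis on $d\rho(X)$), and isolates precisely where the exclusion of nonzero purely imaginary eigenvalues is used. What the paper's route buys in exchange is uniformity: the same triangularization-plus-logarithm template handles Lemma~\ref{L1_5}, Proposition~\ref{P1_3} and Lemma~\ref{L2_1} in one stroke.
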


This lemma states that the stabilizer $G_v:=\{x\in G:\rho(x)\cdot v=v\}$ of $v$ in $G$
is connected as $G_v=\exp(\frakg_v)$ with $\frakg_v:=\{X\in\frakg:d\rho(X)\cdot v=0\}$. The proof
of Lemma~\ref{L2_1} is similar to that of Proposition~\ref{P1_3}. Compare also the
proof of assertion~(43) on p.\ 49 of~\cite{LL}.

\begin{lem}\label{L2_2}
Let $G$ be an exponential solvable Lie group and $\Gamma$ be a discrete abelian subgroup
of $G$ such that $\Gamma\cap[G,G]=\{e\}$. Then there is a syndetic hull of $\Gamma$ in~$G$.
\end{lem}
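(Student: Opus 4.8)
The plan is to show that the logarithms of the elements of $\Gamma$ form a lattice inside an abelian subalgebra $\fraka\subset\frakg$, so that the connected abelian subgroup $A:=\exp(\fraka)$ is the desired syndetic hull. The decisive point is that, in an exponential solvable group, two commuting elements have commuting logarithms; once this is established, the construction and the cocompactness are almost immediate.

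First I would pass from the group to the Lie algebra. Writing each $\gamma\in\Gamma$ uniquely as $\gamma=\exp(X_\gamma)$ with $X_\gamma\in\frakg$ (possible because $\exp$ is a diffeomorphism), I claim that $\gamma\mapsto X_\gamma$ is a homomorphism of $\Gamma$ into $(\frakg,+)$. To see this, take $\gamma,\gamma'\in\Gamma$. Since $\Gamma$ is abelian, $\gamma\exp(X_{\gamma'})\gamma^{-1}=\exp(X_{\gamma'})$, that is $\exp(\Ad(\gamma)X_{\gamma'})=\exp(X_{\gamma'})$, and injectivity of $\exp$ yields $\Ad(\exp(X_\gamma))X_{\gamma'}=X_{\gamma'}$. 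Now I would apply Lemma~\ref{L2_1} to the adjoint representation $\rho=\Ad$, whose differential $\ad$ has, for every $X\in\frakg$, all eigenvalues in $\{0\}\cup(\CC\setminus i\RR)$ because $\frakg$ is exponential solvable. The lemma gives $\ad(X_\gamma)X_{\gamma'}=[X_\gamma,X_{\gamma'}]=0$. Hence the $X_\gamma$ commute pairwise, so $\exp(X_\gamma)\exp(X_{\gamma'})=\exp(X_\gamma+X_{\gamma'})$, and injectivity of $\exp$ gives $X_{\gamma\gamma'}=X_\gamma+X_{\gamma'}$, proving the claim. In particular $\fraka:=\lspan_{\RR}\{X_\gamma:\gamma\in\Gamma\}$ is an abelian subalgebra of $\frakg$ and $\log(\Gamma)=\{X_\gamma:\gamma\in\Gamma\}$ is a subgroup of $(\fraka,+)$.

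It then remains to identify the syndetic hull. Since $\fraka$ is an abelian subalgebra, $A:=\exp(\fraka)$ is a connected abelian subgroup; and because $\exp:\frakg\to G$ is a diffeomorphism carrying the linear subspace $\fraka$ onto $A$, the group $A$ is closed and $\exp:\fraka\to A$ is an isomorphism of topological groups, so $A\cong\RR^d$ with $d=\dim\fraka$. As $A$ is closed, $\Gamma\subset A$ is discrete in $A$, hence $\log(\Gamma)$ is a discrete subgroup of $\fraka\cong\RR^d$ and therefore a lattice in its $\RR$-span. But that span is all of $\fraka$ by the very definition of $\fraka$, so $\log(\Gamma)$ is a full lattice and $\Gamma\setminus A\cong\log(\Gamma)\setminus\fraka$ is a torus, in particular compact. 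Thus $A=\exp(\fraka)$ is a syndetic hull of $\Gamma$.

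The heart of the argument, and its only genuinely nontrivial step, is the passage from commutativity in $G$ to commutativity of logarithms in $\frakg$, which rests on the exponential property (injectivity of $\exp$) together with the eigenvalue condition packaged in Lemma~\ref{L2_1}; everything afterward is linear algebra over $\RR$ and the standard structure of discrete subgroups of $\RR^d$. I note that this particular argument does not seem to require the hypothesis $\Gamma\cap[G,G]=\{e\}$, which is presumably recorded because it is the condition naturally available when Lemma~\ref{L2_2} is invoked in the proof of Theorem~\ref{T2_2}.
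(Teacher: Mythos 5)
Your proof is correct and follows essentially the same route as the paper's: both apply Lemma~\ref{L2_1} to the adjoint representation to turn commutativity in $\Gamma$ into commutativity of the logarithms, take the abelian subalgebra spanned by $\log(\Gamma)$ (your $\fraka$, the paper's $\fraks$), and deduce cocompactness from the fact that $\log(\Gamma)$ is a full lattice in that subalgebra. Your closing observation also matches the paper, whose proof likewise never uses the hypothesis $\Gamma\cap[G,G]=\{e\}$; that condition only reflects the situation in which the lemma is invoked in the proof of Theorem~\ref{T2_2}.
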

\begin{proof}
Define $D:=\{X\in\frakg:\exp(X)\in\Gamma\}$ and $\fraks:=\lspan D$. We claim that $\fraks$
is a commutative Lie subalgebra of $\frakg$: Let $X,Y\in D$. Then it holds
\[\exp(\,\Ad(\exp(X))\cdot Y\,)=\exp(X)\exp(Y)\exp(-X)=\exp(Y)\;,\]
for $\Gamma$ is abelian. Since the exponential map is bijective, it follows
$\Ad(\exp(X))\cdot Y=Y$. Finally Lemma~\ref{L2_1} implies that $\ad(X)\cdot Y=[X,Y]=0$.
This proves $[\fraks,\fraks]=0$.\\

Let $S$ be the connected Lie subgroup of $G$ with Lie algebra $\fraks$. Then $S$ abelian.
If $g\in\Gamma$, then there exists some $X\in\frakg$ such that $\exp(X)=g$. It follows $X\in\fraks$
by the definition of $\fraks$. Hence $g\in S$. This proves $\Gamma\subset S$.\\

Furthermore, $\Gamma\setminus S$ is compact. To prove this, it suffices to note that $D$
is a lattice in~$\fraks$ and $D\setminus \fraks$ is compact, and that the exponential map
$\exp:\fraks\to S$ is a group homomorphism which factors to a continuous surjection
$D\setminus\fraks\to\Gamma\setminus S$.\\
\end{proof}

The idea for the proof of the next theorem is taken out of the proof of theorem~5.4 of~\cite{W2}.
Furthermore, certain aspects from~\cite{W1} were employed. When investigating discontinuous actions
on exponential solvable homogeneous spaces in~\cite{BK}, A.\ Baklouti and I.\ K\'edim indicated a
different proof for the existence of syndetic hulls.

\begin{thm}\label{T2_2}
Let $G$ be a connected Lie group whose Lie algebra is completely solvable. Let $\Gamma$ be a
closed subgroup of $G$. Then there exists a syndetic hull of $\Gamma$ in $G$. If in additon $G$
is simply connected, then the syndetic hull of $\Gamma$ is unique.
\end{thm}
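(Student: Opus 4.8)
The plan is to reduce to the simply connected case and then induct on $\dim G$. For the reduction, let $p\colon\tilde G\to G$ be the universal covering; then $\tilde G$ is simply connected with the same (completely solvable) Lie algebra, and $\tilde\Gamma:=p^{-1}(\Gamma)$ is closed. A syndetic hull $\tilde S$ of $\tilde\Gamma$ in $\tilde G$ projects to $S:=p(\tilde S)$ with $\Gamma\backslash S\cong\tilde\Gamma\backslash\tilde S$ compact, exactly as in the proof of Theorem~\ref{T2_1}; so it suffices to treat simply connected $G$, where we also obtain uniqueness. Now assume $G$ simply connected and argue by induction on $\dim G$. Because $\frakg$ is completely solvable, every $\ad X$ has real eigenvalues, so $\ad(\frakg)$ is simultaneously triangularizable over $\RR$ and $\frakg$ admits a full flag of ideals; fix a codimension-one ideal $\frakh$, set $H:=\exp(\frakh)$ (a closed, connected, simply connected normal subgroup) and let $\pi\colon G\to G/H\cong\RR$ be the projection. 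By the induction hypothesis the closed subgroup $\Gamma_H:=\Gamma\cap H$ has a unique syndetic hull $S_H\subset H$, with Lie algebra $\fraks_H$.

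The crucial step is that $\Gamma$ normalizes $S_H$. Indeed, for $\gamma\in\Gamma$ one has $\gamma\Gamma_H\gamma^{-1}=\Gamma_H$ (as $H\triangleleft G$), so $\gamma S_H\gamma^{-1}$ is again a syndetic hull of $\Gamma_H$ in $H$ and hence equals $S_H$ by uniqueness. Thus $\Gamma\subset N_G(S_H)=\exp(N_{\frakg}(\fraks_H))$, the latter being connected by Proposition~\ref{P1_3} (here complete solvability is exactly what guarantees connectedness of the normalizer). Replacing $G$ by $N_G(S_H)$ and passing to $\bar G:=N_G(S_H)/S_H$, which is again simply connected and completely solvable, write $q$ for the quotient map and $\bar\Gamma:=q(\Gamma)$. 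Since $\Gamma_H\backslash S_H$ is compact we may write $S_H=\Gamma_H C$ with $C\subset S_H$ compact, whence $\Gamma S_H=\Gamma C$ is closed; therefore $\bar\Gamma$ is a closed subgroup of $\bar G$. Moreover, with $\bar H:=q(H\cap N_G(S_H))$ one has $\bar\Gamma\cap\bar H=\{e\}$, because $\gamma\in\Gamma$ maps into $\bar H$ only if $\gamma\in H$, i.e.\ $\gamma\in\Gamma_H\subset S_H$. In particular $\bar\Gamma$ injects into $\bar G/\bar H$, which embeds into $\RR$; hence $\bar\Gamma$ is abelian, and $\bar\Gamma\cap[\bar G,\bar G]\subset\bar\Gamma\cap\bar H=\{e\}$ since $\bar G/\bar H$ is abelian.

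A closed abelian subgroup of the simply connected solvable group $\bar G$ is isomorphic to $\RR^p\times\ZZ^q$, and the injection into $\RR$ forces either $p=0$ (so $\bar\Gamma$ is discrete) or $p=1,\ q=0$ (so $\bar\Gamma\cong\RR$ is already a connected, hence self-syndetic, subgroup). In the discrete case Lemma~\ref{L2_2} applies to the abelian group $\bar\Gamma$ with $\bar\Gamma\cap[\bar G,\bar G]=\{e\}$ and yields a syndetic hull $\bar S\subset\bar G$; in the second case take $\bar S:=\bar\Gamma$. Setting $S:=q^{-1}(\bar S)$ gives a connected subgroup of $G$ by the second assertion of Lemma~\ref{L1_1} (both $\bar S$ and $\ker q=S_H$ are connected); clearly $\Gamma\subset S$, and $\Gamma\backslash S$ fibres over $\bar\Gamma\backslash\bar S$ with fibre $\Gamma_H\backslash S_H$. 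As base and fibre are both compact, $\Gamma\backslash S$ is compact and $S$ is the desired syndetic hull.

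For uniqueness in the simply connected case I would avoid the induction. Since $\exp\colon\frakg\to G$ is bijective, for every subalgebra $\fraks$ one has $g\in\exp(\fraks)\iff\Log(g)\in\fraks$; hence the subalgebra $\fraka:=\bigcap\{\fraks:\fraks\text{ a subalgebra with }\Gamma\subset\exp(\fraks)\}$ still satisfies $\Gamma\subset\exp(\fraka)$, so $A:=\exp(\fraka)$ is the smallest connected subgroup of $G$ containing $\Gamma$. If $S$ is any syndetic hull, then $\Gamma$ is uniform in the simply connected solvable group $S$ and therefore analytically dense in $S$ (as recalled in the introduction); since $A$ is a connected subgroup of $S$ containing $\Gamma$, we get $A=S$, and as this holds for every syndetic hull, uniqueness follows. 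I expect the main obstacle to be the existence argument, and within it the control of the quotient $\bar\Gamma$: one must verify that passing to $N_G(S_H)/S_H$ turns the (possibly dense) image of $\Gamma$ in $G/H$ into a genuinely \emph{closed} subgroup meeting $[\bar G,\bar G]$ trivially, which is precisely what makes Lemma~\ref{L2_2} applicable.
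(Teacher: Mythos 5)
Your proof is correct, but the existence part takes a genuinely different route. The paper does not induct on dimension: it sets $G'=[G,G]$, $\Gamma'=\Gamma\cap G'$, invokes Theorem~\ref{T2_1} (the nilpotent case, resting on the algebraic-group result Proposition~\ref{P2_1}) to get the unique hull $S'$ of $\Gamma'$ in $G'$, then uses the same normalization trick you do ($\Gamma\subset N_G(S')$ by uniqueness, connectedness by Proposition~\ref{P1_3}) and passes to $N_G(S')/S'$; since the abelian closed image of $\Gamma$ there need not be discrete, the paper needs one more reduction (quotienting $N_{\bar G}(\bar\Gamma_0)$ by the identity component $\bar\Gamma_0$) before Lemma~\ref{L2_2} applies. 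Your induction through a codimension-one ideal $\frakh$ replaces both Theorem~\ref{T2_1} and that extra step: because $\bar G/\bar H$ injects continuously into $G/H\cong\RR$, your $\bar\Gamma$ is forced to be either discrete (Lemma~\ref{L2_2}) or connected $\cong\RR$ (its own hull), so existence never touches the Zariski-closure machinery — more self-contained, at the price of quoting the structure theorem $\bar\Gamma\cong\RR^p\times\ZZ^q$, whose proof needs the nontrivial fact (Raghunathan again) that component groups of closed subgroups of solvable groups are finitely generated. You can avoid it: the image of $\bar\Gamma_0$ is a connected subgroup of $\RR$, hence $\{0\}$ or $\RR$; in the first case $\bar\Gamma$ is discrete, in the second $\bar\Gamma=\bar\Gamma_0$ by injectivity — but do note that continuity of the injection is essential here, since $\RR^2$ embeds in $\RR$ as an abstract group. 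Your uniqueness argument (smallest connected subgroup $\exp(\fraka)\supset\Gamma$ plus analytic density of uniform subgroups) is, modulo packaging, the paper's intersection argument $S_1\cap S_2=\exp(\fraks_1\cap\fraks_2)$; and since it is independent of existence, your use of uniqueness in lower dimension inside the existence induction is non-circular.
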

\begin{proof}
First let $G$ be simply connected and hence completely solvable.  We prove the existence of
the syndetic hull in four steps.
\begin{enumerate}[1. {step:}]
\item Put $G':=[G,G]$ and $\Gamma':=\Gamma\cap G'$. Since $G'$ is a simply connected
nilpotent normal Lie subgroup of $G$, it follows from Theorem~\ref{T2_1} that there
exists a unique syndetic hull $S'$ of $\Gamma'$ in $G'$. Since $S'$ is connected,
Lemma~\ref{L1_2} and Proposition~\ref{P1_3} imply that the normalizer $N_G(S')$ is closed
and connected. For $g\in\Gamma$ it holds $g\Gamma'g^{-1}=\Gamma'$ and $gS'g^{-1}\subset G'$.
This shows that $gS'g^{-1}$ is a syndetic hull of $\Gamma'$ in $G'$. By uniqueness it
follows $gS'g^{-1}=S'$. Hence $g\in N_G(S')$. This proves $\Gamma\subset N_G(S')$. Thus
it suffices to find a syndetic hull of $\Gamma$ in $N_G(S')$.
\item Consider the group $\bar{G}:=N_G(S')/S'$ and the projection $q:N_G(S')\to\bar{G}$. Clearly
$\bar{\Gamma}:=q(\Gamma)=\Gamma S'/S'$ is a closed subgroup of $\bar{G}$ because $\Gamma S'$ is closed
in $N_G(S')$ as the preimage of the compact subset $\Gamma\setminus\Gamma S\cong \Gamma'\setminus S'$
under the projection $N_G(S')\to\Gamma\setminus N_G(S')$.\\

Suppose that $\bar{S}$ is a syndetic hull of~$\bar{\Gamma}$ in~$\bar{G}$. Put $S=q^{-1}(\bar{S})$.
By Lemma~\ref{L1_1} we know that $S$ is connected because $S'$ and $\bar{S}$ are connected.
Since $\Gamma'\setminus S'$ and $\Gamma S'\setminus S\cong\bar{\Gamma}\setminus\bar{S}$ are compact, there
exist compact sets $K'\subset S'$ and $K\subset S$ with $S'=\Gamma'K'$ and $S=\Gamma S'K$. This implies
$S=\Gamma K'K$. As $K'K$ is compact, it follows that $\Gamma\setminus S$ is compact. This means
that $S$ is a syndetic hull of $\Gamma$ in $N_G(S')$. Hence it suffices to find a syndetic
hull of $\bar{\Gamma}$ in $\bar{G}$.\\

The following observation is very important: From $p([G,G])=[\bar{G},\bar{G}]$ and $S'\subset[G,G]$
we get $p^{-1}([\bar{G},\bar{G}])=[G,G]$. Together with $p^{-1}(\bar{\Gamma})=\Gamma S'$ this gives
\[p^{-1}(\bar{\Gamma}\cap[\bar{G},\bar{G}])
=p^{-1}(\bar{\Gamma})\cap p^{-1}([\bar{G},\bar{G}])=\Gamma S'\cap[G,G]=S'\]
and hence $\bar{\Gamma}\cap[\bar{G},\bar{G}]=\{eS'\}$. In particular $[\bar{\Gamma},\bar{\Gamma}]=\{eS'\}$
is trivial so that $\bar{\Gamma}$ is abelian.
\item Let $\bar{\Gamma}_0$ be the connected component of the identity of $\bar{\Gamma}$.
Since $\bar{\Gamma}_0$ is an open normal subgroup of $\bar{\Gamma}$, we know that $\bar{\Gamma}$
is contained in the connected closed subgroup $N_{\bar{G}}(\bar{\Gamma}_0)$ of $\bar{G}$. Thus
it suffices to find a syndetic hull of $\bar{\Gamma}$ in~$N_{\bar{G}}(\bar{\Gamma}_0)$.\\

Now we consider the quotient $\tilde{G}:=N_{\bar{G}}(\bar{\Gamma}_0)/\bar{\Gamma}_0$. Clearly
$\tilde{\Gamma}:=\bar{\Gamma}/\bar{\Gamma}_0$ is a discrete subgroup of~$\tilde{G}$. If $\tilde{S}$
is a syndetic hull for $\tilde{\Gamma}$ in $\tilde{G}$, then the preimage $\bar{S}$ of $\tilde{S}$
under the projection $N_{\bar{G}}(\bar{\Gamma}_0)\to\tilde{G}$ is a syndetic hull of $\bar{\Gamma}$
in~$N_{\bar{G}}(\bar{\Gamma}_0)$: Clearly $\bar{\Gamma}\setminus\bar{S}\cong\tilde{\Gamma}\setminus\tilde{S}$
is compact and $\bar{S}$ is connected by Lemma~\ref{L1_1} because $\bar{\Gamma}_0$ and
$\tilde{S}$ are connected. Obviously the subgroup $\tilde{\Gamma}\cap[\tilde{G},\tilde{G}]=\{e\bar{\Gamma}_0\}$
is trivial because its preimage $\bar{\Gamma}\cap[\bar{G},\bar{G}]$ is trivial.
\item Thus we are in the situation of Lemma~\ref{L2_2} which guarantees the existence
of the syndetic hull of $\bar{\Gamma}$ in $\bar{G}$.
\end{enumerate}

Next we prove the uniqueness of the syndetic hull. Let $S_1$ and $S_2$ be syndetic
hulls of $\Gamma$ in $G$ with Lie algebras $\fraks_1$ and $\fraks_2$. Since $G$ is simply
connected, the exponential map of $G$ is bijective. We use this to prove that the
closed subgroup $S_1\cap S_2=\exp(\fraks_1\cap\fraks_2)$ is connected. From $\Gamma\subset S_1\cap S_2\subset S_1$
and $\Gamma\setminus S_1$ compact it follows $S_1\cap S_2=S_1$ and hence $S_1\subset S_2$. An analogous
argument yields $S_2\subset S_1$, i.e., $S_1=S_2$.\\

Finally we treat the case that the group $G$ is not simply connected. As in the proof
of Theorem~\ref{T2_1} we consider the universal covering $p:\tilde{G}\to G$ of $G$. Let
$\Gamma\subset G$ be closed. Define $\tilde{\Gamma}:=p^{-1}(\Gamma)$. We know that there
exists a closed subgroup $\tilde{S}$ of $\tilde{G}$ with $\tilde{\Gamma}\subset\tilde{S}$
and $\tilde{\Gamma}\setminus\tilde{S}$ compact. Put $S:=p(\tilde{S})$. Since
$\Gamma\setminus S\cong\tilde{\Gamma}\setminus\tilde{S}$ it follows that $S$ is a
syndetic hull of $\Gamma$ in $G$.
\end{proof}

\subsection{Syndetic hulls of subgroups of solvable Lie groups}

In this section we formulate certain conditions on closed subgroups $\Gamma$ of arbitrary (exponential)
solvable Lie groups $G$ which are sufficient for the existence of a syndetic hull.

\begin{defn}\label{D3_1}
A subgroup $\Gamma$ of a Lie group $G$ is called algebraically dense if $\Ad(G)$ is contained in
the Zariski-closure of $\Ad(\Gamma)$ taken in the algebraic group $\Aut(\frakg)$, i.e., if
$\underline{\Ad(G)}=\underline{\Ad(\Gamma)}$.
\end{defn}

We do not assume that $\Gamma$ is closed in the (Euclidean) topology of $G$.\\

At this point we refrain from defining a second topology on the Lie group $G$ using the adjoint
representation $\Ad$ and the Zariski-Topologie of $\Aut(\frakg)$. Even for nilpotent Lie groups $G$
we will not do so, neither by means of the adjoint representation nor by means of any faithful
representation, in order to avoid confusion.\\

In~\cite{MM1} Mosak and Moskowitz studied Zariski-dense subgroups $\Gamma$ of connected Lie subgroups $G$
of $\GL(n,\RR)$ and proved that Chabauty's condition holds true for solvable~$G$ by establishing that the
cohomology restriction maps $H^p(G,W)\to H^p(\Gamma,W)$ are isomorphisms for all $p$ and all rational
representations $W$ of $\underline{G}$.\\

These two notions of density are related as follows: Any Zariski-dense subgroup $\Gamma$ of a connected Lie
subgroup $G$ of $\GL(n,\RR)$ is algebraically dense because
\[\underline{\Ad(\Gamma)}=\Ad(\underline{\Gamma})=\Ad(\underline{G})=\underline{\Ad(G)}\]
where $\Ad:\underline{G}\to\Aut(\frakg)$ denotes the homomorphism of algebraic groups obtained by restricting
the coadjoint representation of $\underline{G}$ to $\frakg$. The opposite implication does not hold true.\\

An immediate consequence of $\Gamma$ being algebraically dense is that any $\Ad(\Gamma)$-invariant
subspace of the Lie algebra $\frakg$ of $G$ is $\Ad(G)$-invariant and hence an ideal of~$\frakg$.

\begin{lem}\label{L3_1}
Let $\Gamma$ be an algebraically dense subgroup of a Lie group $G$. If $H$ is a connected
Lie subgroup of $G$ with $gHg^{-1}=H$ for all $g\in\Gamma$, then $H$ is normal in $G$.
\end{lem}
\begin{proof}
Let $\frakh$ be the Lie algebra of $H$. From $gHg^{-1}=H$ we get $\Ad(g)\cdot\frakh=\frakh$ for $g\in\Gamma$.
Since the subgroup $\{\varphi\in\Aut(\frakg):\varphi(\frakh)=\frakh\}$ is Zariski-closed, it follows
$\Ad(x)\cdot\frakh=\frakh$ for all $x\in G$. And as $H$ is connected, we obtain $xHx^{-1}=H$ for all
$x\in G$.
\end{proof}

Images of $\Gamma$ in quotients of $G$ inherit algebraic density.

\begin{lem}\label{L3_2}
Let $S$ be a normal Lie subgroup of a Lie group $G$. If $\Gamma$ is an algebraically dense
subgroup of $G$, then $\Gamma S/S$ is an algebraically dense subgroup of $G/S$.
\end{lem}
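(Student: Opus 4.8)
The plan is to transport the density condition through the quotient by working at the Lie-algebra level and exploiting that morphisms of algebraic groups interact well with Zariski closures. Write $\bar{G}:=G/S$, let $p:G\to\bar{G}$ be the projection, and let $\fraks\subset\frakg$ be the Lie algebra of $S$, so that the Lie algebra of $\bar{G}$ is $\bar{\frakg}=\frakg/\fraks$. Since $S$ is normal in $G$, the subspace $\fraks$ is an ideal of $\frakg$ that is $\Ad(G)$-invariant; hence $\Ad(G)$, and therefore also its Zariski closure $\underline{\Ad(G)}$, is contained in the Zariski-closed stabilizer $P:=\{\varphi\in\Aut(\frakg):\varphi(\fraks)=\fraks\}$. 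By the algebraic density of $\Gamma$ we have $\underline{\Ad(\Gamma)}=\underline{\Ad(G)}\subseteq P$, so every Zariski closure in sight already lives inside $P$.

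First I would introduce the restriction homomorphism $r:P\to\GL(\bar{\frakg})$ sending $\varphi\in P$ to the automorphism $\bar{\varphi}$ it induces on $\frakg/\fraks$. Choosing a basis of $\frakg$ adapted to $\fraks$, the matrix entries of $\bar{\varphi}$ are among those of $\varphi$, so $r$ is a morphism of algebraic groups and in particular is continuous for the Zariski topology; its image lies in $\Aut(\bar{\frakg})$. The naturality of the adjoint representation under the homomorphism $p$ gives the intertwining identity $r\circ\Ad_G=\Ad_{\bar{G}}\circ p$ on $G$: indeed, differentiating $p(g\exp(tX)g^{-1})=p(g)\exp(t\,dp(X))p(g)^{-1}$ yields $dp(\Ad_G(g)X)=\Ad_{\bar{G}}(p(g))\,dp(X)$. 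Consequently $r(\Ad(G))=\Ad(\bar{G})$ and $r(\Ad(\Gamma))=\Ad(p(\Gamma))=\Ad(\Gamma S/S)$.

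The heart of the argument is then the elementary topological fact that a continuous map carries closures into closures, applied to $r$: $r(\underline{\Ad(\Gamma)})\subseteq\underline{r(\Ad(\Gamma))}=\underline{\Ad(\Gamma S/S)}$. Combining this with $\Ad(G)\subseteq\underline{\Ad(\Gamma)}$ (algebraic density of $\Gamma$), I obtain $\Ad(\bar{G})=r(\Ad(G))\subseteq r(\underline{\Ad(\Gamma)})\subseteq\underline{\Ad(\Gamma S/S)}$, whence $\underline{\Ad(\bar{G})}=\underline{\Ad(\Gamma S/S)}$ after using the trivial reverse inclusion. Here all closures may be computed in $\GL(\bar{\frakg})$ rather than in $\Aut(\bar{\frakg})$, since the latter is Zariski-closed and contains every element involved, so the equality is exactly the assertion that $\Gamma S/S$ is algebraically dense in $G/S$. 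I expect the only delicate point to be the verification that $r$ is genuinely a morphism of algebraic groups and that $r\circ\Ad_G$ correctly reproduces the adjoint action of $G/S$ on $\frakg/\fraks$; once that bookkeeping is in place the statement reduces entirely to the functoriality of Zariski closure, and no structure theory of exponential or solvable groups is required.
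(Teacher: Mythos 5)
Your proposal is correct and follows essentially the same route as the paper: both pass to the Zariski-closed stabilizer $\{\varphi\in\Aut(\frakg):\varphi(\fraks)=\fraks\}$, use that the induced map to $\Aut(\frakg/\fraks)$ is a morphism of algebraic groups, and push the density condition through via functoriality of Zariski closure. The only (harmless) difference is that you use merely the continuity inclusion $r(\underline{\Ad(\Gamma)})\subseteq\underline{r(\Ad(\Gamma))}$, where the paper asserts the equality $\pi(\,\underline{\Ad(\Gamma)}\,)=\underline{\pi(\Ad(\Gamma))}$; the inclusion suffices, so your argument is if anything slightly more economical.
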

\begin{proof}
We define $\bar{G}=G/S$, $\bar{\frakg}=\frakg/\fraks$ and
 $\Aut(\frakg,\fraks)=\{\varphi\in\Aut(\frakg):\varphi(\fraks)=\fraks\}$. Clearly the natural
map $\pi:\Aut(\frakg,\fraks)\to\Aut(\bar{\frakg})$ is a homomorphism of algebraic groups.
Now the assumption $\Ad(G)\subset\underline{\Ad(\Gamma)}$ implies
\[\Ad(\bar{G})=\pi(\Ad(G))\subset\pi(\,\underline{\Ad(\Gamma)}\,)=\underline{\pi(\Ad(\Gamma))}\subset\underline{\Ad(\bar{\Gamma})}\;.\]
\end{proof}

\begin{lem}\label{L3_3}
Let $G$ be an arbitrary Lie group. If $\Gamma$ is an algebraically dense solvable subgroup of $G$
contained in the image of the exponential map, then $G$ is solvable.
\end{lem}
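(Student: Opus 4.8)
The plan is to reduce the assertion to the solvability of the Lie algebra $\frakg$ and to extract this from the solvability of $\Ad(\Gamma)$ by passing to Zariski closures. Since $\Gamma$ is solvable and $\Ad$ is a homomorphism, the subgroup $\Ad(\Gamma)$ of $\Aut(\frakg)\subset\GL(\frakg)$ is solvable as an abstract group. The first key step is the standard fact that the Zariski closure of a solvable subgroup of $\GL(V)$ is again solvable, the point being that the terms of the derived series of $\underline{\Ad(\Gamma)}$ are the Zariski closures of the terms of the derived series of $\Ad(\Gamma)$, which terminates. Hence $B:=\underline{\Ad(\Gamma)}$ is a solvable algebraic group, and by algebraic density $\underline{\Ad(G)}=\underline{\Ad(\Gamma)}=B$.

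Next I would relate $B$ to $\frakg$. As an algebraic subgroup of $\GL(\frakg)$ the group $B$ is in particular a closed Lie subgroup, and since $B$ is solvable its Lie algebra $\Lie(B)$ is a solvable Lie algebra; concretely, Lie--Kolchin applied to the connected solvable group $B_0$ simultaneously triangularizes it over $\CC$, so that $\Lie(B)=\Lie(B_0)$ is triangular, hence solvable. The connected subgroup $\Ad(G_0)$ of $\Ad(G)$ is an immersed Lie subgroup of $B$ whose Lie algebra is $\ad(\frakg)$, so $\ad(\frakg)\subset\Lie(B)$ is solvable. Finally $\ad(\frakg)\cong\frakg/\frakz(\frakg)$, and since an extension of the solvable algebra $\ad(\frakg)$ by the abelian center $\frakz(\frakg)$ is again solvable, $\frakg$ is solvable and therefore $G$ is a solvable Lie group.

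The hypothesis that $\Gamma$ lies in the image of $\exp$ enters at the delicate point where one wants the triangularizing flag to be respected by all of $\Ad(G)$, and not merely by the identity component. Writing $\gamma=\exp(X)$ gives $\Ad(\gamma)=e^{\ad(X)}\in\Ad(G_0)\subset B$, so $\Ad(\Gamma)$ is contained in the connected component $B_0$; taking Zariski closures then forces $B=B_0$, i.e.\ $\underline{\Ad(\Gamma)}$ is connected, whence the flag furnished by Lie--Kolchin for $B$ is $\Ad(G)$-invariant and $\ad(\frakg)$ is triangularizable. I expect the main obstacle to be precisely this passage from the abstract, possibly disconnected group $\Ad(\Gamma)$ to the Lie algebra $\ad(\frakg)$: one cannot apply Lie's theorem to $\Ad(\Gamma)$ directly, so the two facts that carry the argument — that Zariski closure preserves solvability and that $\ad(\frakg)\subset\Lie(\underline{\Ad(G)})$ — must be combined with the connectedness supplied by the exponential hypothesis. (Note that ``$G$ solvable'' can only mean that $\frakg$ is solvable, since the component group of $G$ is invisible to $\Ad$ and cannot be controlled by these hypotheses.)
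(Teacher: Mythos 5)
Your main argument is correct, but it takes a genuinely different route from the paper's. The paper works inside $G$: it uses the hypothesis $\Gamma\subset\exp(\frakg)$ to form $\fraks=\lspan\{Y\in\frakg:\exp(Y)\in\Gamma\}$ and the associated connected \emph{normal} subgroup $S$ with $\Gamma\subset S$, and then compares derived series, proving $\Ad(S_{j+1})\subset\underline{\Ad(\Gamma_{j+1})}$ by induction from the same commutator--closure identity you cite; since $\Gamma_r=\{e\}$ this makes $S$ solvable, and then $\Ad(G)\subset\underline{\Ad(S)}$ is solvable. You instead stay inside $\GL(\frakg)$ and descend to the Lie algebra: $\Ad(\Gamma)$ solvable gives $B:=\underline{\Ad(\Gamma)}$ solvable, algebraic density gives $\Ad(G)\subset B$, hence $\ad(\frakg)\subset\Lie(B)$ is solvable, and $\frakg$ is solvable as a central extension of $\ad(\frakg)$. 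Your version is shorter and exposes something the paper's formulation hides: the hypothesis $\Gamma\subset\exp(\frakg)$ is not actually needed for the stated conclusion --- in the paper it serves only to obtain $\Gamma\subset S$, and what that construction buys is the extra structure recorded in the remark following the lemma (a chain of connected normal subgroups $S_j$ in which the $\Gamma_j$ are algebraically dense), which is of independent interest there. Both proofs share the caveat you flag: for disconnected $G$ the literal statement fails (a finite nonabelian simple group, viewed as a zero-dimensional Lie group, is a counterexample), so ``solvable'' must be read at the level of $\frakg$, respectively $G_0$; the paper's final step likewise uses $\ker\Ad=Z(G)$, which presupposes connectedness.

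One genuine error sits in your closing aside, although it does not damage the proof: from $\Ad(\Gamma)\subset B_0$ (the Euclidean identity component) you cannot conclude $B=B_0$ by taking Zariski closures, because the Euclidean identity component of a real algebraic group need not be Zariski closed. For instance, the Zariski closure of the group $\{2^n:n\in\ZZ\}\subset\GL(1,\RR)$ is all of $\RR^{\times}$, which is Euclidean-disconnected, although the group lies in $\RR_{>0}$. Fortunately your argument never needs $B$ to be connected: $\Lie(B)=\Lie(B_0)$ is solvable in any case, and $\ad(\frakg)\subset\Lie(B)$ is all you use.
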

\begin{proof}
Put $\fraks=\RR-\lspan\{Y\in\frakg:\exp(Y)\in\Gamma\}$. Now $\exp(\Ad(g)\cdot Y)=g\exp(Y)g^{-1}\in\Gamma$ for
all $Y\in\fraks$ implies $\Ad(g)\cdot\fraks=\fraks$ for all $g\in\Gamma$ and hence $\Ad(x)\cdot\fraks=\fraks$
for all $x\in G$ because $\Gamma$ is algebraically dense. Let $S$ be the connected normal subgroup of~$G$
with Lie algebra $\fraks$. The assumption $\Gamma\subset\exp(\frakg)$ implies $\Gamma\subset S$.\\

We consider the derived series of $\Gamma$ given by $\Gamma_0:=\Gamma$ and $\Gamma_{j+1}:=[\Gamma_j,\Gamma_j]$.
We have $\Gamma=\Gamma_0\supset\Gamma_1\supset\ldots\supset\Gamma_{r-1}\supset\Gamma_r=\{e\}$ with $\Gamma_j/\Gamma_{j+1}$
abelian. Now we put $S_0:=S$ and define $S_{j+1}:=[S_j,S_j]$ as the connected Lie subgroup generated by
commutators of elements of $S_j$. It holds $\Gamma_j\subset S_j$ and $\Ad(S_0)\subset\Ad(G)\subset\underline{\Ad(\Gamma)}$.
By induction we obtain
\begin{align*}
\Ad(S_{j+1})&=\Ad([S_j,S_j])=[\Ad(S_j),\Ad(S_j)]\subset[\underline{\Ad(\Gamma_j)},\underline{\Ad(\Gamma_j)}]\\
&=\underline{[\Ad(\Gamma_j),\Ad(\Gamma_j)]}=\underline{\Ad([\Gamma_j,\Gamma_j])}=\underline{\Ad(\Gamma_{j+1})}\;.
\end{align*}
This shows that $\Ad(S_r)\subset\underline{\Ad(\Gamma_r)}=\{\Id\}$ is trivial so that $S_r\subset Z(G)$ is abelian.
Since the quotient $S_j/S_{j+1}$ is abelian for $0\le j\le{r-1}$, it follows that $S$ is solvable. Moreover,
we have $\underline{\Ad(G)}=\underline{\Ad(\Gamma)}=\underline{\Ad(S)}$ which shows that $\Ad(G)$ and hence $G$
are solvable.
\end{proof}

The crucial condition $\Ad(S_j)\subset\underline{\Ad(\Gamma_j)}$ in the proof of Lemma~\ref{L3_3} means that
$\Gamma_j$ is an algebraically dense subgroup of~$S_j$.\\

We observe the following rigidity phenomenon: If $\Gamma$ is a solvable algebraically dense subgroup of $G$
with $\Gamma\subset\exp(\frakg)$ and derived series $\Gamma=\Gamma_0\supset\ldots\supset\Gamma_r=\{e\}$, then
there exists a series $S=S_0\supset\ldots\supset S_r$ of connected normal Lie subgroups such that $S_j/S_{j+1}$
is abelian and $\Gamma_j$ is algebraically dense in $S_j$.\\

\begin{lem}\label{L3_4}
Let $\Gamma$ be a uniform subgroup of a simply connected nilpotent Lie group~$G$. Then $\Gamma$
is algebraically dense in~$G$.
\end{lem}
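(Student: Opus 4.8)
The plan is to reduce the statement to Zariski density inside a linear model and then invoke Proposition~\ref{P2_1}, exactly in the spirit of the proof of Theorem~\ref{T2_1}. First I would choose, as there, a faithful representation $\rho:G\to\GL(n,\RR)$ such that $\dot{G}:=\rho(G)$ is a connected unipotent subgroup of $\GL(n,\RR)$; such a $\rho$ exists because $G$ is simply connected nilpotent. By Proposition~\ref{P2_1} the group $\dot{G}$ is Zariski-closed, and $\rho:G\to\dot{G}$ is a topological equivalence. Hence $\dot{\Gamma}:=\rho(\Gamma)$ is a closed subgroup of $\dot{G}$, and $\dot{\Gamma}\setminus\dot{G}\cong\Gamma\setminus G$ is compact.

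Next I would apply the cocompactness criterion of Proposition~\ref{P2_1} to the inclusion $\dot{\Gamma}\subset\dot{G}$: since $\dot{\Gamma}\setminus\dot{G}$ is compact, $\dot{G}$ is contained in the Zariski-closure $\underline{\dot{\Gamma}}$. As $\dot{G}$ is itself Zariski-closed and contains $\dot{\Gamma}$, this forces $\underline{\dot{\Gamma}}=\dot{G}$; that is, $\dot{\Gamma}$ is Zariski-dense in $\dot{G}$.

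Now I would transport this to algebraic density. As recorded before Lemma~\ref{L3_1}, a Zariski-dense subgroup of a connected Lie subgroup of $\GL(n,\RR)$ is algebraically dense, the key identity being $\underline{\Ad(\dot{\Gamma})}=\Ad(\underline{\dot{\Gamma}})=\Ad(\dot{G})=\underline{\Ad(\dot{G})}$, where $\Ad$ denotes the algebraic homomorphism of $\dot{G}$ into $\Aut(\frakg)$. Thus $\dot{\Gamma}$ is algebraically dense in $\dot{G}$. Finally, the faithful map $\rho$ is an isomorphism of Lie groups onto $\dot{G}$, so $d\rho$ intertwines the adjoint representations of $G$ and $\dot{G}$ and induces an isomorphism of the algebraic groups $\Aut(\frakg)$ that carries $\Ad(\Gamma)$ to $\Ad(\dot{\Gamma})$, carries $\Ad(G)$ to $\Ad(\dot{G})$, and preserves Zariski-closures. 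Hence $\Ad(G)\subset\underline{\Ad(\Gamma)}$, i.e. $\Gamma$ is algebraically dense in $G$.

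The routine verifications are the existence of the unipotent faithful $\rho$ and the fact that $\rho$ is a topological equivalence, both of which already occur in Theorem~\ref{T2_1}. The one point that deserves care, and which I regard as the main obstacle, is the last transport step: one must check that algebraic density, which is defined intrinsically through $\Ad$ and the Zariski topology of $\Aut(\frakg)$, is genuinely preserved under the Lie-group isomorphism $\rho$, i.e. that conjugation by $d\rho$ is an isomorphism of algebraic groups and therefore maps Zariski-closures to Zariski-closures. I would also remark that the more direct route of applying Proposition~\ref{P2_1} to $\Ad$ itself is less convenient, since $\Ad$ has kernel $Z(G)$ and one would then first have to prove that $\Ad(\Gamma)$ is closed in $\Ad(G)$, equivalently that $\Gamma\cap Z(G)$ is cocompact in $Z(G)$; passing through the faithful representation avoids this, because $\rho(\Gamma)$ is automatically closed.
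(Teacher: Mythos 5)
Your proof is correct, but it follows a genuinely different route from the paper's. You pass to a faithful unipotent linear model $\rho:G\to\GL(n,\RR)$, apply Proposition~\ref{P2_1} there to get $\underline{\rho(\Gamma)}=\rho(G)$, invoke the remark preceding Lemma~\ref{L3_1} (Zariski density implies algebraic density), and transport the conclusion back along $\rho$; this last step is indeed sound, since conjugation by $d\rho$ is an isomorphism of algebraic groups from $\Aut(\frakg)$ onto $\Aut(\dot{\frakg})$ which intertwines the two adjoint representations and is in particular a homeomorphism for the Zariski topologies. The paper instead applies Proposition~\ref{P2_1} directly inside $\GL(\frakg)$: the group $\Ad(G)$ is itself a connected unipotent, hence Zariski-closed, subgroup of $\GL(\frakg)$, and the possible failure of $\Ad(\Gamma)$ to be closed --- precisely the obstacle you cite as your reason for avoiding the direct route --- is circumvented not by proving closedness (equivalently, cocompactness of $\Gamma\cap Z(G)$ in $Z(G)$), but by replacing $\Ad(\Gamma)$ with its Euclidean closure $\overline{\Ad(\Gamma)}$: the quotient $\overline{\Ad(\Gamma)}\setminus\Ad(G)$ is compact, being the continuous image of the compact space $\Gamma\setminus G$, and $\Ad(\Gamma)$ and $\overline{\Ad(\Gamma)}$ have the same Zariski closure, so Proposition~\ref{P2_1} still yields $\Ad(G)\subset\underline{\Ad(\Gamma)}$. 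Thus your assessment that the direct approach would first require proving $\Ad(\Gamma)$ closed overstates the difficulty. Comparing the two: the paper's argument is shorter and intrinsic, needing neither the embedding theorem nor a transport step; yours trades those two extra ingredients for the convenience of working with an honestly closed cocompact subgroup, and it reuses verbatim the setup already established in the proof of Theorem~\ref{T2_1}.
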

\begin{proof}
First, $\Ad(G)$ is simply connected and Zariski-closed as a connected unipotent Lie subgroup
of $\GL(\frakg)$. Let $\overline{\Ad(\Gamma)}$ be the closure of $\Ad(\Gamma)$ with respect to the (Euclidean)
topology of $G$. Then $\overline{\Ad(\Gamma)}\setminus\Ad(G)$ is a compact Hausdorff space because
$\Gamma\setminus G$ is compact and the natural map $\Gamma\setminus G\to\overline{\Ad(\Gamma)}\setminus\Ad(G)$
is a continuous surjection. Since $\Ad(\Gamma)$ and $\overline{\Ad(\Gamma)}$ have the same
Zariski-closure, it follows $\underline{\Ad(\Gamma)}=\underline{\Ad(G)}$ from Proposition~\ref{P2_1}.
Thus $\Gamma$ is algebraically dense.
\end{proof}

Algebraic density is sufficient for the existence of syndetic hulls.

\begin{thm}\label{T3_1}
Let $G$ be a connected solvable Lie group and $\Gamma$ an algebraically dense closed subgroup
of~$G$. Then $\Gamma$ has a syndetic hull in~$G$. The syndetic hull of $\Gamma$ is unique if
$G$ is simply connected.
\end{thm}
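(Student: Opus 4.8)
The plan is to mirror the four-step reduction used in the proof of Theorem~\ref{T2_2}, with algebraic density playing the role that complete solvability played there: it is precisely the device that forces the auxiliary subgroups to be \emph{normal}, so that the normaliser manipulations of Theorem~\ref{T2_2} collapse. First I would dispose of the non-simply-connected case exactly as in Theorems~\ref{T2_1} and~\ref{T2_2}: passing to the universal covering $p:\tilde G\to G$ and setting $\tilde\Gamma:=p^{-1}(\Gamma)$, one checks that $\tilde\Gamma$ is again closed and algebraically dense (under the identification of Lie algebras one has $\Ad_{\tilde G}(\tilde\Gamma)=\Ad_G(\Gamma)$ and $\Ad_{\tilde G}(\tilde G)=\Ad_G(G)$, since $p(\tilde\Gamma)=\Gamma$), and that a syndetic hull $\tilde S$ downstairs pushes down to a syndetic hull $S:=p(\tilde S)$ with $\Gamma\setminus S\cong\tilde\Gamma\setminus\tilde S$. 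So it suffices to treat simply connected $G$.

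For existence in the simply connected case I would proceed as follows. Put $G':=[G,G]$, a simply connected nilpotent normal subgroup, and $\Gamma':=\Gamma\cap G'$, which is closed. By Theorem~\ref{T2_1} there is a \emph{unique} syndetic hull $S'$ of $\Gamma'$ in $G'$. For $g\in\Gamma$ one has $g\Gamma'g^{-1}=\Gamma'$, so $gS'g^{-1}$ is again a syndetic hull of $\Gamma'$ in $G'$, whence $gS'g^{-1}=S'$ by uniqueness. Here is the decisive point: since $S'$ is connected and normalised by the algebraically dense group $\Gamma$, Lemma~\ref{L3_1} upgrades this to $S'$ being normal in all of $G$, so that $N_G(S')=G$ and the normaliser complications of Theorem~\ref{T2_2} disappear. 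I then pass to $\bar G:=G/S'$ with projection $q$, and $\bar\Gamma:=q(\Gamma)=\Gamma S'/S'$; the set $\Gamma S'$ is closed because $\Gamma\setminus\Gamma S'\cong\Gamma'\setminus S'$ is compact, so $\bar\Gamma$ is closed, and by Lemma~\ref{L3_2} it is algebraically dense in $\bar G$. The commutator computation of Theorem~\ref{T2_2}, step~2, then carries over verbatim: from $q^{-1}([\bar G,\bar G])=[G,G]$ and $\Gamma S'\cap[G,G]=S'$ one obtains $\bar\Gamma\cap[\bar G,\bar G]=\{eS'\}$, so $\bar\Gamma$ is abelian.

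Next, let $\bar\Gamma_0$ be the identity component of $\bar\Gamma$. It is a connected Lie subgroup normalised by $\bar\Gamma$, hence normal in $\bar G$ again by Lemma~\ref{L3_1}. Forming $\tilde G:=\bar G/\bar\Gamma_0$ and $\tilde\Gamma:=\bar\Gamma/\bar\Gamma_0$ yields a \emph{discrete abelian} subgroup which is algebraically dense (Lemma~\ref{L3_2}) and, since its preimage $\bar\Gamma\cap[\bar G,\bar G]$ is trivial, satisfies $\tilde\Gamma\cap[\tilde G,\tilde G]=\{e\}$. This places us exactly in the hypotheses of Lemma~\ref{L2_2}, which produces a syndetic hull $\tilde S$ of $\tilde\Gamma$ in $\tilde G$. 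It then remains to pull $\tilde S$ back up the chain $\tilde G\leftarrow\bar G\leftarrow G$: at each stage the preimage is connected by Lemma~\ref{L1_1} (the kernels $\bar\Gamma_0$ and $S'$ being connected), and cocompactness over $\Gamma$ is assembled by multiplying the compact fundamental domains $\Gamma'\setminus S'$, $\bar\Gamma\setminus\bar S$ and $\tilde\Gamma\setminus\tilde S$ as in Theorem~\ref{T2_2}, giving the desired $S$. For uniqueness (simply connected case) I would copy Theorem~\ref{T2_2}: using that $\exp$ is bijective, two syndetic hulls $S_1,S_2$ with Lie algebras $\fraks_1,\fraks_2$ satisfy $S_1\cap S_2=\exp(\fraks_1\cap\fraks_2)$, which is connected, and then $\Gamma\subset S_1\cap S_2\subset S_1$ with $\Gamma\setminus S_1$ compact forces $S_1=S_1\cap S_2=S_2$.

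The main obstacle I anticipate is not the normality (algebraic density makes it free) but ensuring we genuinely land in the situation where the base case applies: both Lemma~\ref{L2_2} and the bijectivity of $\exp$ used for uniqueness require the relevant group to be \emph{exponential} solvable, not merely solvable. This is the standing hypothesis of the section; concretely, since the Lie algebra of $G$ is exponential solvable, the universal cover $\tilde G$ is exponential solvable by Proposition~\ref{P1_1}, and exponentiality is inherited by the quotients $\bar G=G/S'$ and $\bar G/\bar\Gamma_0$ (a quotient of an exponential solvable group is exponential solvable), so Lemma~\ref{L2_2} does apply at the last stage and $\exp$ is bijective where uniqueness needs it. The remaining work is the routine bookkeeping of transporting compact fundamental domains through the two successive quotients.
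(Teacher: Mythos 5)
Your reduction scheme (universal cover, split off a syndetic hull $S'$ inside $G'=[G,G]$ via Theorem~\ref{T2_1}, use uniqueness plus Lemma~\ref{L3_1} to make $S'$ normal, pass to $\bar G=G/S'$, show $\bar\Gamma$ is closed, algebraically dense and abelian) coincides with the paper's proof up to the harmless choice $\Gamma'=\Gamma\cap G'$ instead of $[\Gamma,\Gamma]$, and your observation that algebraic density replaces the normalizer arguments of Theorem~\ref{T2_2} is exactly the right one. The gap is in how you finish. Theorem~\ref{T3_1} is stated for an \emph{arbitrary} connected solvable Lie group $G$, but both of your concluding steps require exponential solvability: Lemma~\ref{L2_2} is stated only for exponential solvable groups, and your uniqueness argument needs $\exp_G$ to be bijective so that $S_1\cap S_2=\exp(\fraks_1\cap\fraks_2)$. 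You import this by calling exponentiality ``the standing hypothesis of the section'', but it is not a hypothesis of the theorem: for example $G$ could be the universal cover of the Euclidean motion group, $\RR\ltimes\RR^2$ with roots $\pm i$, where $\exp$ is neither injective nor surjective. Your argument therefore proves the theorem only for the subclass of exponential solvable $G$, and your proposed repair (``exponentiality is inherited by quotients'') cannot start, because $G$ itself need not be exponential.

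The idea you are missing --- and the paper's decisive step --- is to use algebraic density a second time, on the quotient itself: since $\bar\Gamma$ is abelian and algebraically dense in $\bar G$, the group $\Ad(\bar G)$ is contained in the Zariski closure of the abelian group $\Ad(\bar\Gamma)$, and Zariski closures of abelian groups are abelian; hence $\Ad(\bar G)$ is abelian, so $[\bar{\frakg},\bar{\frakg}]\subset Z(\bar{\frakg})$ and $\bar G$ is a simply connected \emph{2-step nilpotent} Lie group. Then Theorem~\ref{T2_1} (which needs neither discreteness of $\bar\Gamma$ nor any exponentiality) immediately gives existence \emph{and} uniqueness of the syndetic hull $\bar S$ of $\bar\Gamma$ in $\bar G$; the passage to $\bar\Gamma_0$, Lemma~\ref{L2_2}, and bijectivity of $\exp$ all become unnecessary. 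Uniqueness upstairs is likewise reduced to the nilpotent case: with the paper's choice $\Gamma'=[\Gamma,\Gamma]$, any syndetic hull $S_i$ of $\Gamma$ satisfies $\Gamma'\subset[S_i,S_i]$, and since connected subgroups of the unipotent group $G'$ are Zariski closed while $S'$ is the Zariski closure of $\Gamma'$, one gets $S'\subset[S_i,S_i]\subset S_i$; so $S_1/S'$ and $S_2/S'$ are syndetic hulls of $\bar\Gamma$ in the nilpotent group $\bar G$ and coincide by Theorem~\ref{T2_1}. If you insert the 2-step-nilpotency observation, your existence argument closes up (it even makes $\bar G$ and $\bar G/\bar\Gamma_0$ exponential, validating your use of Lemma~\ref{L2_2}, though at that point the lemma is no longer needed), but your uniqueness argument still has to be replaced along the lines above, since it leans directly on injectivity of $\exp_G$ on all of $G$.
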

\begin{proof}
Let $p:\tilde{G}\to G$ be the universal covering of $G$ and $\tilde{\Gamma}=p^{-1}(\Gamma)$. Identifying
the Lie algebras of $\tilde{G}$ and $G$, we see that $\Ad(\tilde{G})=\Ad(G)$ and $\Ad(\tilde{\Gamma})=\Ad(\Gamma)$
because $\ker p\subset Z(\tilde{G})$. Thus $\tilde{\Gamma}$ is algebraically dense in $\tilde{G}$. If $\tilde{S}$
is a syndetic hull of $\tilde{\Gamma}$ in $\tilde{G}$, then $S:=p(\tilde{S})$ is a syndetic hull of $\Gamma$
in $G$.\\

So let $G$ be simply connected. Let $G':=[G,G]$ and $\Gamma':=[\Gamma,\Gamma]$. Note that
$\Gamma'$ is algebraically dense in~$G'$. Since $G'$ is a simply connected nilpotent Lie
group, there is a unique syndetic hull $S'$ of $\Gamma'$ in $G'$ by Theorem~\ref{T2_1}. As
in the proof of Theorem~\ref{T2_2} we obtain $\Gamma\subset N_G(S')$ and hence $\Ad(g)\fraks'=\fraks'$
for all $g\in\Gamma$. Since $\Gamma$ is algebraically dense in $G$, it follows $\Ad(x)\fraks'=\fraks'$
for all $x\in G$. This means that $S'$ is normal in $G$ because $S'$ is connected.\\

The quotient $\bar{G}=G/S'$ is again a simply connected solvable Lie group. We claim that $\bar{G}$
contains $\bar{\Gamma}=\Gamma S'/S'$ as a closed subgroup: Since $\Gamma\cap S'$ is uniform in~$S'$, it
follows that $\Gamma\setminus\Gamma S'\cong (\Gamma\cap S')\setminus S'$ is compact and hence closed
in~$\Gamma\setminus G$. Consequently, its preimage $\Gamma S'$ under the projection $G\to\Gamma\setminus G$
is closed in $G$. This proves that $\bar{\Gamma}$ is closed in $\bar{G}$. By Lemma~\ref{L3_2} we
know that $\bar{\Gamma}$ is algebraically dense in~$\bar{G}$. For $g_1,g_2\in\Gamma$ and $x_1,x_2\in S'$
we find that
\[[g_1x_1,g_2x_2]=(g_1x_1g_1^{-1})(g_1g_2x_2x_1^{-1}g_2^{-1}g_1^{-1})(g_1g_2g_1^{-1}g_2^{-1})
(g_2x_2^{-1}g_2^{-1})\]
lies in $S'$. This shows $[\Gamma S',\Gamma S']\subset S'$ which means that $\bar{\Gamma}$ is abelian.
Since $\Ad(\bar{G})$ is contained in the Zariski closure of the abelian group $\Ad(\bar{\Gamma})$,
it follows that $\Ad(\bar{G})$ is also abelian. This means that $\bar{G}$ is a simply connected
2-step nilpotent Lie group. By Theorem~\ref{T2_1} we know that there exists a unique
syndetic hull $\bar{S}$ of $\bar{\Gamma}$ in $\bar{G}$. By Lemma~\ref{L1_1} the closed subgroup
$S:=q^{-1}(\bar{S})$ of $G$ is connected because $\bar{S}$ and $S'$ are connected. As in step 2.\
of the proof of Theorem~\ref{T2_2} we conclude that $\Gamma\setminus G$ is compact using that
$\Gamma'\setminus G'$ and $\bar{\Gamma}\setminus\bar{G}$ are compact. Thus $S$ is a syndetic hull
of $\Gamma$ in $G$.\\

It remains to prove uniqueness. Let $S_1,S_2$ be syndetic hulls of $\Gamma$ in $G$. Since
$\Gamma\subset S_1$, we have $\Gamma'=[\Gamma,\Gamma]\subset[S_1,S_1]$. Since $[S_1,S_1]$ is connected,
it follows $S'\subset[S_1,S_1]\subset S_1$. Similarly $S'\subset S_2$. Clearly $\bar{S}_1=S_1/S'$ and
$\bar{S}_2=S_2/S'$ are syndetic hulls of $\bar{\Gamma}$ in $\bar{G}$. By Theorem~\ref{T2_1} it
follows $\bar{S}_1=\bar{S}_2$ and hence $S_1=S_2$.
\end{proof}

\vspace{0.5cm}

\begin{defn}\label{D3_2}
A subgroup $\Gamma$ of a Lie group $G$ is called algebraically connected if the Zariski closure
of~$\Ad(\Gamma)$ in~$\Aut(\frakg)$ is connected in the Euclidean topology, i.e., if
$\underline{\Ad(\Gamma)}=\underline{\Ad(\Gamma)}^{\circ}$.
\end{defn}

Algebraic connectedness is sufficient for the existence of syndetic hulls.

\begin{prop}\label{P3_1}
Let $G$ be a solvable Lie group with connected center. If $\Gamma$ is an algebraically connected
subgroup of $G$, then $\Gamma$ has a syndetic hull in~$G$.
\end{prop}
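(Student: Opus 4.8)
The plan is to reduce the statement to Theorem~\ref{T3_1} on algebraically \emph{dense} subgroups. Concretely, I would try to produce a connected closed Lie subgroup $H$ of $G$ with $\Gamma\subset H$ in which $\Gamma$ is algebraically dense; Theorem~\ref{T3_1} then yields a syndetic hull $S$ of $\Gamma$ in the connected solvable group $H$, and since $S$ is a connected Lie subgroup of $G$ with $\Gamma\setminus S$ compact, it is automatically a syndetic hull of $\Gamma$ in $G$. Since the notion of syndetic hull presupposes it, I take $\Gamma$ closed in $G$; then $\Gamma$ is closed in $H$ once $H$ is closed in $G$.

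To build $H$, set $A:=\underline{\Ad(\Gamma)}$, which is connected in the Euclidean topology by hypothesis and, since $\Gamma\subset G$ is solvable, a connected solvable algebraic subgroup of $\Aut(\frakg)$. With $\fraka:=\Lie(A)\subset\frakgl(\frakg)$, the subspace $\frakh:=\{X\in\frakg:\ad(X)\in\fraka\}$ is a Lie subalgebra, because $\ad([X,Y])=[\ad(X),\ad(Y)]\in[\fraka,\fraka]\subset\fraka$. I let $H$ be the connected Lie subgroup of $G$ with Lie algebra $\frakh$. Since $\frakz(\frakg)=\ker\ad\subset\frakh$ and $Z(G)$ is connected by hypothesis, one gets $Z(G)\subset H$; this is exactly the point at which the connected center is used. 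Consequently $H=\Ad^{-1}(Q^{\circ})$, where $Q:=A\cap\Ad(G)$ and $Q^{\circ}=\Ad(H)$ is its identity component, and Lemma~\ref{L1_1} applied to the quotient map $\Ad\colon G\to\Ad(G)$ (whose kernel $Z(G)$ is connected) confirms that this preimage is connected; it is also closed, being the preimage of the closed subgroup $Q^{\circ}$ under $\Ad$.

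The hard part will be to show $\Gamma\subset H$, that is, $\Ad(\Gamma)\subset Q^{\circ}$. I know that $\Ad(\Gamma)\subset Q=A\cap\Ad(G)$ and that $\Ad(\Gamma)$ is Zariski-dense in the connected group $A$, so that $Q$ is Zariski-dense in $A$; the difficulty is to upgrade this to the \emph{Euclidean} statement that each $\Ad(\gamma)$ lies in the identity component $Q^{\circ}$, equivalently that $Q=A\cap\Ad(G)$ is connected. As intersections of connected groups can be disconnected, I expect this to be the crux and to require the solvable structure: the image $\Ad(G)\cong G/Z(G)$ of the connected (exponential) solvable group $G$ carries no small compact subgroups, and together with the connectedness of $A$ this should force the semisimple and unipotent Jordan parts of every $\Ad(\gamma)$, and hence $\Ad(\gamma)$ itself, into $Q^{\circ}$.

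Granting $\Gamma\subset H$, the remaining steps are routine. Then $\Ad(\Gamma)\subset Q^{\circ}=\Ad(H)$ gives $A=\underline{\Ad(\Gamma)}\subset\underline{\Ad(H)}\subset A$, so $\underline{\Ad(H)}=\underline{\Ad(\Gamma)}$; restricting operators to the invariant subspace $\frakh$ is a morphism of algebraic groups carrying $\Ad_G$ to the adjoint representation $\Ad_H$ of $H$, and morphisms preserve Zariski density, so $\Ad_H(\Gamma)$ is Zariski-dense in $\Ad_H(H)$. Thus $\Gamma$ is algebraically dense in the connected solvable group $H$, and Theorem~\ref{T3_1} produces the desired syndetic hull.
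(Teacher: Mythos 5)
Your overall reduction is the same as the paper's: manufacture a closed \emph{connected} subgroup $H\supset\Gamma$ in which $\Gamma$ is algebraically dense, then quote Theorem~\ref{T3_1}. The paper takes $H:=\Ad^{-1}(\underline{\Ad(\Gamma)})$, so that $\Gamma\subset H$ is automatic and the whole burden is the connectedness of $H$, which it asserts in one line from the connectedness of $Z(G)$; you instead make $H$ connected by construction (integrating $\frakh=\{X:\ad(X)\in\fraka\}$) and shift the burden to the inclusion $\Gamma\subset H$. These burdens are two faces of the same statement: with $Q:=\underline{\Ad(\Gamma)}\cap\Ad(G)$, your $H$ is $\Ad^{-1}(Q^{\circ})$ and the paper's is $\Ad^{-1}(Q)$, so you need $\Ad(\Gamma)\subset Q^{\circ}$, while the paper's appeal to Lemma~\ref{L1_1} needs the (even stronger) connectedness of $Q$, not just of $\ker\Ad=Z(G)$. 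You never prove your version of this step; you only write that the solvable structure ``should force'' $\Ad(\gamma)$ into $Q^{\circ}$. That is a genuine gap: the proposal is not a proof, and it is incomplete at exactly the point you yourself identify as the crux.

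Moreover, under the hypotheses as stated the missing step is actually false, so it cannot be filled without changing something. Take $G=\RR\ltimes_{\rho}\CC^{2}$ with $\rho(t)$ acting as $e^{it}$ on the first copy of $\CC$ and $e^{i\alpha t}$ on the second, $\alpha$ irrational: $G$ is connected, solvable, with trivial (hence connected) center, and $\Gamma:=2\pi\ZZ\times\{0\}$ is a closed subgroup. Then $\Ad(2\pi n,0)$ is trivial on $\RR\oplus\CC$ and is the rotation $e^{2\pi i\alpha n}$ on the second $\CC$, so $A:=\underline{\Ad(\Gamma)}$ is the full circle of such rotations, which is Euclidean-connected; thus $\Gamma$ is algebraically connected. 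But $Q=A\cap\Ad(G)$ is exactly $\Ad(\Gamma)$, a countable dense subgroup of that circle, so $Q^{\circ}$ is trivial: your $\frakh$ is $\{0\}$, your $H$ is trivial, and $\Gamma\not\subset H$. (Your Jordan-decomposition hope breaks precisely because the semisimple parts form an irrational winding inside a compact torus.) Note that the same example defeats the paper's one-line claim as well: there $\Ad^{-1}(A)=\Gamma$, which is not connected, so connectedness of $Z(G)$ alone does not do the job; yet the conclusion of Proposition~\ref{P3_1} still holds, since $\RR\times\{0\}$ is a syndetic hull of $\Gamma$. The obstruction is purely imaginary spectrum, i.e.\ compact torus factors in $\underline{\Ad(G)}$, which is exactly what exponentiality of $G$ excludes; so any correct completion of your argument (or of the paper's) needs a hypothesis of that kind, or a construction of $H$ that does not pass through $\Ad$ alone.
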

\begin{proof}
Let $H$ denote the $\Ad$-preimage of $\underline{\Ad(\Gamma)}$. Since $Z(G)$ is connected, we know
that $H$ is a closed connected subgroup of $G$. By definition $\Gamma$ is an algebraically dense closed
subgroup of~$H$. By Theorem~\ref{T3_1} it follows that $\Gamma$ possesses a syndetic hull in $H$
and hence in~$G$.
\end{proof}

Recall that $Z(G)$ is connected provided that $G$ is exponential.

{\small

}

\end{document}